\definecolor{darkcerulean}{rgb}{0.03, 0.27, 0.49}
\numberwithin{equation}{section}
\newtheorem{theorem}{Theorem}[section]
\newtheorem{lemma}[theorem]{Lemma}
\newtheorem{proposition}[theorem]{Proposition}
\newtheorem{definition}[theorem]{Definition}
\theoremstyle{definition}
\newtheorem{remark}[theorem]{Remark}
\newcommand{\eps}{\varepsilon}
\newcommand{\fl}{(-\Delta)^{s\,}}
\newcommand{\flp}{(-\Delta)^{s} p}
\newcommand{\beq}{\begin{equation}}
\newcommand{\eeq}{\end{equation}}
\newcommand{\R}{{\mathbb R}}
\newcommand{\N}{{\mathbb N}}
\newcommand{\C}{{\mathbb C}}
\newenvironment{enumroman}{\begin{enumerate}

}{\end{enumerate}}
\title[Solutions of the fractional heat equation]{Existence and convexity of solutions\\ of the fractional heat equation}
\author[A.\ Greco]{Antonio Greco}
\author[A.\ Iannizzotto]{Antonio Iannizzotto}
\address{Dipartimento di Matematica e Informatica
\newline\indent
Universit\`a degli Studi di Cagliari
\newline\indent
Via Ospedale 72, 09124 Cagliari -- Viale L.\ Merello 92, 09123 Cagliari, Italy}
\email{greco@unica.it, antonio.iannizzotto@unica.it}
\subjclass[2010]{35K05, 35R11, 35B30}
\keywords{Heat equation, fractional Laplacian, convexity.}
\begin{document}

\begin{abstract}
We prove that the initial-value problem for the fractional heat equation admits an entire solution provided that the (possibly unbounded) initial datum has a conveniently moderate growth at infinity. Under the same growth condition we also prove that the solution is unique. The result does not require any sign assumption, thus complementing the Widder's type theorem of Barrios et al.~\cite{BPSV} for positive solutions. Finally, we show that the fractional heat flow preserves convexity of the initial datum. Incidentally, several properties of stationary convex solutions are established.
\end{abstract}

\maketitle

\begin{center}
Version of \today.
\end{center}

\section{Introduction}\label{sec1}

\noindent
The fractional Laplacian operator is defined by
\beq\label{fl} \fl u(x)=C_{N,s}\,{\rm PV} \! \int_{\R^N}\frac{\, u(x)-u(y) \,}{\, |x-y|^{N+2s} \,}\,dx,
\quad s \in (0,1), \ N \ge 1,
\eeq
where
 $u \colon \R^N\to\R$ is a conveniently smooth function,
 `PV' stands for `principal value' and $C_{N,s}>0$ is a suitable normalizing constant (see Section~\ref{sec2} for a precise definition). Though being defined as an integral operator, $\fl$ can be seen as a fractional power of the Laplacian operator, and exhibits some similar properties and behavior to those of the Laplacian (for a detailed account on such properties, see~\cite{CS,CS1,C,CS2,DPV}). The outstanding feature of~$\fl$ is {\em non-locality}, which can be expressed as follows: the quantity $\fl u(x)$ depends not only on the values of $u$ in a neighborhood of $x$ (as is the case for the Laplacian), but rather on the values of $u$ at any point $y\in\R^N$.
\vskip2pt
\noindent
While convergence of the integral in~\eqref{fl} at $x$ depends on the regularity of the function $u$, convergence at infinity is obviously related to the asymptotic behavior of $u(y)$ as $|y|\to\infty$. In many references (see for instance~\cite{DPV}), therefore, $\fl u$ is only computed for $u$ lying in the Schwartz space~$\mathcal{S}$ of rapidly decaying $C^\infty$ functions, or in the fractional Sobolev space $H^s(\R^N)$ which also consists of functions decaying at infinity. That is also the reason why stationary problems involving $\fl$ on a bounded domain $\Omega\subset\R^N$ are often coupled with a Dirichlet-type exterior condition of the form $u(x)=0$ in $\R^N\setminus\Omega$ (see for instance~\cite{GS,IMS,RS,SV}, and~\cite{MN,SV1} on different ways to restrict fractional equations to bounded domains).
\vskip2pt
\noindent
Concerning evolutive problems, it has been pointed out that parabolic-type equations involving a classical derivative with respect to time and a fractional space diffusion are the natural outcome of certain stochastic processes (see~\cite{BGR,BV}, and~\cite{VTV} for a different approach based on the second law of thermodynamics). The simplest possible evolutive equation of such type is the {\em fractional heat equation}, and the main subject of this paper is the related initial-value problem:
\beq\label{ivp}
\begin{cases}
u_t+\fl u=0 & \text{in $\R^N\times(0,\infty)$,} \\
u(x,0)=u_0(x) & \text{in $\R^N$.}
\end{cases}
\eeq
The equation in~\eqref{ivp}, along with its non-linear variants, has been given considerable attention recently, both concerning regularity and existence/uniqueness results (see for instance~\cite{BPSV,CF,FR} and the website~\cite{mediawiki}). In particular, in~\cite{BPSV} a uniqueness result for the positive solution of~\eqref{ivp} was proved under the assumption that the initial datum~$u_0$ is positive (like in the classical result of~\cite{W} on the heat equation).
\vskip2pt
\noindent
Here we let $s$ span the whole interval $(0,1)$ and allow the initial datum~$u_0$ to be unbounded and sign-changing, subject to a polynomial bound of the type
\beq\label{gc0}
|u_0(x)|\le A_0+B_0 \, |x|^{2s-\sigma}
\quad
\mbox{for all $x \in \mathbb R^N$},
\eeq
with $A_0,B_0,\sigma>0$. We prove that problem~\eqref{ivp} admits exactly one (possibly sign-changing) solution $u(x,t)$ defined in $\mathbb R^N \times [0,+\infty)$ and satisfying
\beq\label{gc}
|u(x,t)|\le A(t)+B \, |x|^{2s-\sigma}
\quad \mbox{for all $(x,t) \in \R^N \times [0,+\infty)$}
\eeq
with $A$ being a sublinear function, i.e.,
\beq\label{att}
\lim_{t\to +\infty}\frac{\, A(t) \,}{t}=0,
\eeq
and~$B>0$. The proof is based on a weak maximum principle and some new estimates of the heat kernel, developing those of~\cite{BG,BGR,CKS}. We remark that space diffusion takes place in~$\R^N$ and the solution does not decay as $|x|\to\infty$, so the type of solutions we deal with are continuous functions satisfying the equation pointwise and are not required to belong to the standard fractional Sobolev classes (see Section~\ref{sec3} for details).
\vskip2pt
\noindent
The polynomial bounds in~\eqref{gc0} and~\eqref{gc} are natural for the following reasons. If, for a fixed~$t>0$, $u(x,t)$ grows as fast as $|x|^{2s}$ when $|x|\to\infty$, then the integral in~\eqref{fl} which defines $\fl u(x,t)$ may fail to converge. Furthermore, since the fractional heat kernel decays polynomially as $|x|\to +\infty$ (see Section~\ref{sec3}), the bound on~$u_0$ plays a role in the convergence of the convolution integral which defines the solution. In\cite{CS1} a solution of a stationary fractional equation is produced via the fractional heat kernel, and exhibits a growth of the type above. The uniqueness result in the present paper (claim~\ref{uniqueness} of Theorem~\ref{exun}) can be seen as a non-local analogue to the classical result for the heat equation (see~\cite[(1.36c)]{J}) ensuring uniqueness of the solution under the exponential growth condition
\[|u(x,t)|\le A \, e^\frac{\, B \, |x|^2 \,}{t} \ (A,B>0)\]
which is in turn related to the exponential decay of the classical heat kernel. The analogy with the classical case breaks down when it comes to \textit{existence}: indeed, Theorem~\ref{exun} \ref{solves} ensures existence of the canonical solution for all times $t \in [0,+\infty)$ provided that the initial datum~$u_0$ satisfies~\eqref{gc0} \textit{for some} $A_0,B_0 > 0$. By contrast, when $s = 1$ the canonical solution exists in the bounded interval $[0, \, \frac1{\, 4B \,})$ provided that
\begin{equation}\label{gc0exp}
|u_0(x)| \le A \, e^{B \, |x|^2}
\quad
\mbox{for all $x \in \mathbb R^N$}
\end{equation}
(see~\cite[(1.14)]{J}) hence in order to have existence for all $t \in [0,+\infty)$ it is required that \textit{for all} $B > 0$ there exists $A > 0$ such that \eqref{gc0exp} holds.
\vskip2pt
\noindent
We also address the study of geometric properties of the solution of problem~\eqref{ivp}, focusing on {\em convexity} in the space variable $x$ (see Section~\ref{sec4}). In the classical case~$s=1$ we know that the heat flow preserves convexity: quoting~\cite{IS}, ``it is easily seen that, when $\Omega=\R^N$, every solution of [the heat equation] with moderate growth at space infinity preserves the spatial concavity of [the initial datum] at any time~$t>0$''.
\vskip2pt
\noindent
We prove that a similar result holds in the fractional case $s \in (1/2, \, 1)$, and give precise statements in Theorem~\ref{geosol} and Theorem~\ref{geosolclassical}. In particular we prove that if the initial datum~$u_0$ is convex, then the solution $u(x,t)$ of problem~\eqref{ivp} not only is convex in~$x$ at any time~$t>0$, but also it is either strictly convex or its graph consists of parallel straight lines (see~\cite{G} for a similar result on stationary problems). To achieve the result we study very carefully the properties of the fractional Laplacian of convex functions in the stationary case. Once again, it is essential here that we do not consider space-decaying solutions, since a continuous function on~$\R^N$ with a decay at infinity cannot be convex. The condition $s \in (1/2, \, 1)$ is natural for two reasons. First, if $s\le 1/2$ the fractional Laplacian $\fl u$ does not converge for any convex function $u \colon \R^N\to\R$ (not even for affine functions). Second, the growth condition~\eqref{gc0} is consistent with convexity only if $2s>1$.
\vskip2pt
\noindent
The paper has the following structure: Section~\ref{sec2} deals with the definition and some properties of the stationary fractional Laplacian; Section~\ref{sec3} contains the mentioned existence and uniqueness result for the fractional heat equation; Section~\ref{sec4} is about fractional Laplacians of convex functions and the propagation of convexity through the fractional heat flow; and an Appendix (Section~\ref{sec5}) is devoted to proving some estimates on the fractional heat kernel and its derivatives.

\section{Some properties of the stationary fractional Laplacian}\label{sec2}

\noindent
First we need to make the definition in~\eqref{fl} more precise. To do so, we recall some well-known facts of measure ad integration theory. We will use the notation
\[\R^+=[0,+\infty),\quad \R^+_0=(0,+\infty), \quad \R^-=(-\infty,0], \quad \R^-_0=(-\infty,0).\]
Given a measurable (bounded or unbounded) set $D\subseteq\R^N$ and a measurable function $f\colon D\to\R$, we set $f^\pm(x)=\max\{\pm f(x),0\}$ for all $x\in D$ and consider the (Lebesgue) integrals
\[I^\pm=\int_D f^\pm(x)\,dx,\]
both taking values in $\R^+\cup\{+\infty\}$. If at least one of $I^\pm$ is finite, we say that $f$ is {\em integrable} in $D$, while if~both $I^\pm$ are finite we say that $f$ is {\em summable} in $D$. If $f$ is either integrable or summable we write
\[\int_D f(x)\,dx=I^+-I^-, \quad \int_D |f(x)|\,dx=I^++I^-.\]
Now let $s\in(0,1)$, $u\in C(\R^N)$, $x\in\R^N$ and define
\[C_{N,s}:=\frac{\, 2^{2s} s \, \Gamma(\frac{N}{2}+s) \,}{\pi^\frac{N}{2} \,
  \Gamma(1-s)}=\Big(\int_{\R^N}\frac{\, 1-\cos(x_1) \,}{|x|^{N+2s}}\,dx\Big)^{\!-1}\]
(see~\cite[Remark 3.11]{CS} and~\cite[formula (3.2)]{DPV}, respectively). For all $\eps>0$ we denote by $B_\eps(x)$ the open ball in~$\R^N$ centered at $x$ with radius~$\eps$ (for brevity we shall write $B_\eps=B_\eps(0)$), and consider the function
\begin{equation}\label{integrand}
y\mapsto\frac{\, u(x)-u(y) \,}{\, |x-y|^{N+2s} \,}, \quad y\in B^c_\eps(x)
\end{equation}
(notation: $D^c=\R^N\setminus D$). Now we define precisely~$\fl u$:

\begin{definition}\label{fldef}
Assume $s\in(0,1)$. If the function in~\eqref{integrand} is integrable for all $\eps>0$ and there exists
\[\lim_{\eps\to 0^+}\int_{B^c_\eps(x)}\frac{u(x)-u(y)}{\, |x-y|^{N+2s} \,}\,dy=: L\in\R\cup\{+\infty, -\infty\},\]
then we say that $\fl u$ is definite at~$x$. If, in addition, $L\in\R$, then we say that $\fl u$ converges at~$x$. In both cases we set
\[\fl u(x)=C_{N,s} \, L.\]
If the function in~\eqref{integrand} is not integrable for some $\eps_0>0$ (and hence for all $\eps\in(0,\eps_0)$), then we say that $\fl u$ is indefinite at~$x$.
\end{definition}

\noindent
If $u$ belongs to the Schwartz space~$\mathcal{S}$, an equivalent definition of~$\fl$ is given by
\beq\label{flf}
\fl u(x)=\mathcal{F}^{-1}\big(|\xi|^{2s} \, \mathcal{F}u(\xi)\big)(x),
\eeq
where $\mathcal{F}$ and~$\mathcal{F}^{-1}$ denote, respectively, the direct and inverse Fourier transforms, i.e.
\[\mathcal{F}v(\xi)=\int_{\R^N}e^{-ix\cdot\xi} \, v(x)\,dx, \quad \mathcal{F}^{-1}v(x)=\frac{1}{(2\pi)^N}\int_{\R^N}e^{ix\cdot\xi} \, v(\xi)\,d\xi.\]
It is also well-known that for all $u \in \mathcal{S}$ the following non-singular representation, which does not require the principal value, holds:
\beq\label{fl-dd}
\fl u(x)=\frac{\, C_{N,s} \,}{2}\int_{\R^N}\frac{\, 2u(x) - u(x + z) - u(x - z) \,}{|z|^{N+2s}}\,dz.
\eeq
The equivalence between \eqref{fl} and~\eqref{flf} via the normalization constant~$C_{N,s}$, together with a proof of~\eqref{fl-dd} is found, for instance, in~\cite{DPV}. 
We see next that such the representation~\eqref{fl-dd} also holds for less regular, possibly unbounded functions~$u$ satisfying a convenient growth condition. We also prove the continuity of~$\fl u(x)$:

\begin{theorem}\label{continuity}
Let $u\in C^2(\R^N)$ satisfy
\beq\label{t1}
|u(x)|\le A+B \, |x|^{2s-\sigma}
\ \text{for all $x\in\R^N$ ($A,B,\sigma>0$).}
\eeq
Then:
\begin{enumroman}
\item\label{continuity1} the fractional Laplacian $\fl u(x)$ converges for all $x\in\R^N$, and the representation~\eqref{fl-dd} holds;
\item\label{continuity2} the function $\fl u$ is continuous in~$\R^N$.
\end{enumroman}
\end{theorem}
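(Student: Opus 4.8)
The plan is to split the defining integral of $\fl u(x)$ into a local part over a small ball $B_\delta(x)$ and a far-field part over $B_\delta^c(x)$, and to control the two pieces by, respectively, the $C^2$ regularity of $u$ and the growth condition~\eqref{t1}. For part~\ref{continuity1}, in the local part I would use the second-order Taylor expansion: writing $2u(x)-u(x+z)-u(x-z)=-\int_0^1\!\int_{-1}^{1}\!\nabla^2 u(x+tz)z\cdot z\,\ldots$, more simply bounding $|2u(x)-u(x+z)-u(x-z)|\le \|D^2u\|_{L^\infty(B_\delta(x))}\,|z|^2$, so that the symmetric integrand $\bigl(2u(x)-u(x+z)-u(x-z)\bigr)/|z|^{N+2s}$ is absolutely integrable on $B_\delta$ because $|z|^{2-N-2s}$ is integrable near the origin (as $2s<2$). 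On $B_\delta^c(x)$ I would bound $|u(x)-u(y)|\le 2A+B|x|^{2s-\sigma}+B|y|^{2s-\sigma}$ and use that for $|y-x|\ge\delta$ the kernel $|x-y|^{-N-2s}$ together with this polynomial growth gives an absolutely convergent integral, since $|y|^{2s-\sigma}|x-y|^{-N-2s}$ decays like $|y|^{-N-\sigma}$ at infinity (here $\sigma>0$ is exactly what is needed). Absolute convergence on $B_\delta^c(x)$ shows $\fl u$ is definite and the integrand in~\eqref{integrand} is integrable for every $\eps>0$; then, since the even part of $u(x)-u(y)$ about $x$ is what survives the principal value, a standard symmetrization argument (pairing $y=x+z$ with $y=x-z$) together with dominated convergence as $\eps\to0^+$ yields that $L\in\R$ and that the limit equals $\tfrac12\int_{\R^N}\bigl(2u(x)-u(x+z)-u(x-z)\bigr)|z|^{-N-2s}\,dz$, which is precisely~\eqref{fl-dd}.

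For part~\ref{continuity2}, I would fix $x_0$, take $x$ in a bounded neighborhood, and show continuity by dominated convergence applied to the representation~\eqref{fl-dd}. The integrand $g(x,z):=\bigl(2u(x)-u(x+z)-u(x-z)\bigr)|z|^{-N-2s}$ is continuous in $x$ for each fixed $z\neq0$; the issue is a uniform-in-$x$ integrable majorant. Near $z=0$ one uses $|g(x,z)|\le \|D^2u\|_{L^\infty(K)}\,|z|^{2-N-2s}$ with $K$ a fixed compact neighborhood of $x_0$ containing all the relevant $x+z$; for $|z|$ large one uses the growth bound to get $|g(x,z)|\le C(1+|z|^{2s-\sigma})|z|^{-N-2s}\le C'|z|^{-N-\sigma}$ uniformly for $x$ in the neighborhood. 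Both majorants are integrable on their respective regions, so dominated convergence gives $\fl u(x)\to\fl u(x_0)$ as $x\to x_0$.

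The main obstacle, I expect, is bookkeeping rather than anything deep: one must carefully justify that the principal-value limit in Definition~\ref{fldef} coincides with the non-singular symmetric integral, i.e.\ that the antisymmetric part of the integrand integrates to zero in the PV sense over $B_\delta^c(x)\setminus B_\eps(x)$ and that one may pass to the limit $\eps\to0^+$. This is where the $C^2$ hypothesis is used in full strength (via the second difference quotient), and where one must be attentive that, although $y\mapsto(u(x)-u(y))|x-y|^{-N-2s}$ is merely \emph{integrable} (not summable) on $B_\delta^c(x)$ for small $\delta$ because of the singularity at $y=x$, the symmetrized version is genuinely summable. A secondary technical point is to make the growth estimate $|x|^{2s-\sigma}+|y|^{2s-\sigma}$ precise with explicit constants depending only on a bound for $|x|$, so that everything is uniform on neighborhoods — this is routine once one notes $|x+z|^{2s-\sigma}\le C(|x|^{2s-\sigma}+|z|^{2s-\sigma})$ by subadditivity-type inequalities for the exponent $2s-\sigma\in(0,2)$ (splitting into the cases $2s-\sigma\le1$ and $2s-\sigma>1$).
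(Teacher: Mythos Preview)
Your proposal is correct and, for part~\ref{continuity1}, follows essentially the same line as the paper: split at a fixed radius, use the second-order Taylor bound $|2u(x)-u(x+z)-u(x-z)|\le \sup_{B_r(x)}|D^2u|\,|z|^2$ to make the inner piece absolutely integrable, use the growth bound~\eqref{t1} for the outer piece, and symmetrize (pairing $y=x+z$ with $y=x-z$) to pass from the PV integral to the non-singular representation~\eqref{fl-dd}.

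For part~\ref{continuity2} your argument is a genuine (and slightly cleaner) variant. You build a \emph{single} $x$-uniform majorant on all of $\R^N$---namely $\|D^2u\|_{L^\infty(K)}|z|^{2-N-2s}$ near $z=0$ and $C(1+|z|^{2s-\sigma})|z|^{-N-2s}$ for $|z|$ large---and apply dominated convergence directly to~\eqref{fl-dd}. The paper instead runs an $\eps$--$\delta$ sandwich: given $\delta>0$ it chooses a small radius $r_\delta$ so that the inner integral over $B_{r_\delta}$ is uniformly $<\delta$ for $x$ near $x_0$, applies dominated convergence only to the outer integral over $B_{r_\delta}^c$, and then traps $\liminf$/$\limsup$ of $\fl u(x)$ within $C\delta$ of $\fl u(x_0)$. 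Both routes rest on the same two estimates; yours is more economical, while the paper's makes the quantitative error more visible (and its outer-integral step is exactly the place where the growth bound with $|u(x\pm z)|\le A+B(|x_0|+r_\delta+|z|)^{2s-\sigma}$ is invoked, matching your uniform-in-$x$ majorant for large $|z|$).
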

\begin{proof}
Choose $x \in \R^N$. For every $r > \varepsilon > 0$ we have
\begin{align*}
\int_{B^c_\varepsilon(x)}\frac{\, u(x) - u(y) \,}{|x - y|^{N + 2s}}\,dy &= \int_{B_r \cap B^c_\varepsilon}\frac{\, u(x) - u(x+z) \,}{|z|^{N + 2s}}\,dz+\int_{B^c_r}\frac{\, u(x) - u(x+z) \,}{|z|^{N + 2s}}\,dz \\
&= \int_{B_r \cap B^c_\varepsilon}\frac{\,u(x)-u(x-z) \,}{|z|^{N + 2s}}\,dz+\int_{B^c_r}\frac{\, u(x) - u(x-z) \,}{|z|^{N + 2s}}\,dz,
\end{align*}
where all integrals are finite by~\eqref{t1}. By adding the two equalities, we obtain
\begin{align}\label{summation}
2\int_{B^c_\varepsilon(x)}\frac{\, u(x) - u(y) \,}{|x - y|^{N + 2s}}\,dy &= \int_{B_r\cap B^c_\varepsilon}\frac{\, 2u(x) - u(x + z) - u(x - z) \,}{|z|^{N + 2s}}\,dz \\
\nonumber&+ \int_{B^c_r}\frac{\, 2u(x) - u(x + z) - u(x - z) \,}{|z|^{N + 2s}}\,dz.
\end{align}
Besides, for all $z\in B_r\setminus \{0\} \supset B_r \cap B^c_\varepsilon$ we have by Taylor expansion
\begin{equation}\label{Taylor}
\frac{\, |2u(x)-u(x+z)-u(x-z)| \,}{|z|^{N+2s}}\le\frac{1}{\, |z|^{N+2s-2} \,}\,\sup_{\eta\in B_r(x)}|D^2 u(\eta)|.
\end{equation}
Since the right-hand side is summable on~$B_r$ (being $N+2s-2<N$), we deduce
\[\lim_{\varepsilon \to 0^+}\int_{B_r\cap B^c_\varepsilon}\frac{\, 2u(x)-u(x+z)-u(x-z) \,}{|z|^{N+2s}}\,dz=\int_{B_r}\frac{\, 2u(x)-u(x+z)-u(x-z) \,}{|z|^{N+2s}}\,dz.\]
Moreover, the last integral in~\eqref{summation} is finite by~\eqref{t1} and does not depend on~$\eps$. So, we pass to the limit in~\eqref{summation} as $\eps\to 0^+$ and we get
\[\frac{2}{C_{N,s}}\,\fl u(x)=\int_{\R^N}\frac{2u(x)-u(x+z)-u(x-z)}{|z|^{N+2s}}\,dz,\]
which proves~\ref{continuity1}.
\vskip2pt
\noindent
To prove~\ref{continuity2} we fix an arbitrary $x_0 \in \R^N$ and we set
\[M=\!\sup_{\eta\in B_1(x_0)}|D^2 u(\eta)|.\]
For all $x \in\R^N$ and all $r > 0$, by~\eqref{Taylor} we have
\begin{align}\label{first_integral}
\Big|\int_{B_r}\frac{\, 2u(x)-u(x+z)-u(x-z) \,}{|z|^{N+2s}}\,dz\Big| &\le N \omega_N\sup_{\eta\in B_r(x)}|D^2 u(\eta)|\,\int_0^r \frac{dr}{\, r^{2s - 1} \,} \\
\nonumber&= N \omega_N\,\frac{\, r^{2 - 2s} \,}{\, 2 - 2s \,}\,\sup_{\eta\in B_r(x)}|D^2 u(\eta)|.
\end{align}
Now, for any $\delta>0$ we choose $r_\delta \in (0,1/2)$ s.t.\ $N \omega_N \, M \,r_\delta^{2 - 2s} < (2 - 2s) \, \delta$. Since $r_\delta < 1/2$, for all $x
\in B_{r_\delta}(x_0)$ we have $B_{r_\delta}(x) \subset B_1(x_0)$ and therefore
\[\Big|\int_{B_{r_\delta}}\frac{\, 2u(x)-u(x-z)-u(x+z) \,}{|z|^{N+2s}}\,dz\Big|<\delta.\]
This and the representation~\eqref{fl-dd} imply that for every $x \in B_{r_\delta}(x_0)$ we have
\begin{align}\label{bound}
\int_{B^c_{r_\delta}}\frac{\, 2u(x) - u(x + z) - u(x - z) \,}{|z|^{N + 2s}}\,dz-\delta &< \frac{2}{C_{N,s}}\,\fl u(x) \\
\nonumber&< \int_{B^c_{r_\delta}}\frac{\, 2u(x) - u(x + z) - u(x - z) \,}{|z|^{N + 2s}}\,dz+\delta.
\end{align}
The estimate~\eqref{t1} ensures the existence of a summable majorant: indeed $u(x)$ stays bounded as $x$ ranges in~$B_{r_\delta}(x_0)$, while $|u(x \pm z)|\le A + B \, (|x_0|+r_\delta+|z|)^{2s - \sigma}$. Thus, we may pass to the limit under the sign of integral as $x \to x_0$ and get
\begin{align*}
& \int_{B^c_{r_\delta}}\frac{\, 2u(x_0) - u(x_0 + z) - u(x_0 - z) \,}{|z|^{N + 2s}}\,dz-\delta \le \frac2{\, C_{N,s} \,}\,\liminf_{x \to x_0}\,\fl u(x) \\
&\le \frac2{\, C_{N,s} \,}\,\limsup_{x \to x_0}\,\fl u(x) \le \int_{B^c_{r_\delta}}\frac{\, 2u(x_0) - u(x_0 + z) - u(x_0 - z) \,}{|z|^{N + 2s}}\,dz+\delta.
\end{align*}
Using the estimate~\eqref{bound} at $x = x_0$, the inequalities above become
\[\fl u(x_0) - C_{N,s} \, \delta \le \liminf_{x \to x_0}\,\fl u(x) \le \limsup_{x \to x_0}\,\fl u(x) \le \fl u(x_0)+C_{N,s} \, \delta.\]
Since $\delta$ is arbitrary, \ref{continuity2} follows.
\end{proof}

\noindent
We aim at finding sufficient conditions s.t.\ $\fl u(x)$ vanishes as $|x|\to +\infty$. We begin by establishing some estimates:

\begin{lemma}\label{estimate}
Let $u\in C^2(\R^N)$ satisfy
\beq\label{d2}
|D^2u(x)|\le \frac{C}{|x|^\alpha} \ \text{for all $x\in\R^N\setminus\{0\}$ ($C>0$, $\alpha>0$).}
\eeq
Moreover, if $\alpha \ge 1$ assume that there exist $A,B>0$ s.t.
\beq\label{esti1}
|u(x)|\le A+B \, |x|^{2-\alpha} \ \text{for all $x\in\R^N$.}
\eeq
Then there exist $A_1,B_1>0$ s.t.
\beq\label{esti2}
|2u(x)-u(x-z)-u(x+z)|\le A_1+B_1 \, |z|^{2-\alpha} \ \text{for all $x,z\in\R^N$.}
\eeq
\end{lemma}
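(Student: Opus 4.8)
The plan is to control the symmetric second difference $2u(x)-u(x-z)-u(x+z)$ by combining Taylor's formula with integral remainder with the growth bound \eqref{esti1}, splitting according to whether $|z|$ is small or large compared with $|x|$. Two preliminary reductions are in order. If $\alpha<1$, estimate \eqref{esti1} is not assumed but follows from \eqref{d2}: writing $u(x)=u(0)+\nabla u(0)\cdot x+\int_0^1(1-t)\,D^2u(tx)[x,x]\,dt$ and using $|D^2u(tx)|\le C\,t^{-\alpha}|x|^{-\alpha}$ together with the convergence of $\int_0^1(1-t)\,t^{-\alpha}\,dt$ (which holds precisely because $\alpha<1$), one gets $|u(x)|\le|u(0)|+|\nabla u(0)|\,|x|+\tfrac{C}{1-\alpha}|x|^{2-\alpha}$, and since $2-\alpha>1$ the linear term is absorbed into the constant and the power term, so \eqref{esti1} holds. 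If instead $\alpha\ge 2$, then \eqref{esti1} together with $u\in C^2(\R^N)$ forces $u$ to be bounded on $\R^N$, whence $|2u(x)-u(x-z)-u(x+z)|\le 4\|u\|_\infty$ and \eqref{esti2} holds trivially (say with $B_1=1$). It therefore remains to treat $0<\alpha<2$ under the assumption that \eqref{esti1} holds.

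Fixing $x,z\in\R^N$, I would then distinguish two cases. If $|x|\ge 2|z|$, every point of the segment from $x-z$ to $x+z$ satisfies $|x\pm tz|\ge|x|-|z|\ge|z|$ for $t\in[0,1]$, hence $|D^2u(x\pm tz)|\le C\,|z|^{-\alpha}$ by \eqref{d2}; applying Taylor's formula with integral remainder to $t\mapsto u(x\pm tz)$ and adding the two expansions yields
\[|2u(x)-u(x+z)-u(x-z)|\le|z|^2\int_0^1(1-t)\big(|D^2u(x+tz)|+|D^2u(x-tz)|\big)\,dt\le C\,|z|^{2-\alpha}.\]
If instead $|x|<2|z|$, then also $|x\pm z|\le|x|+|z|<3|z|$, so by \eqref{esti1} and the monotonicity of $t\mapsto t^{2-\alpha}$ (valid since $2-\alpha>0$),
\[|2u(x)-u(x+z)-u(x-z)|\le 2|u(x)|+|u(x+z)|+|u(x-z)|\le 4A+2\big(2^{2-\alpha}+3^{2-\alpha}\big)B\,|z|^{2-\alpha}.\]
Choosing $A_1=4A$ and $B_1=\max\{C,\,4\cdot 3^{2-\alpha}B\}$ then gives \eqref{esti2} for all $x,z\in\R^N$.

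The one delicate point is the degeneration of the Hessian bound \eqref{d2} near the origin when $\alpha\ge 1$: a naive Taylor estimate along the whole line through $x\pm z$ would involve the divergent integral $\int_0^1|x+tz|^{-\alpha}\,dt$ whenever that segment meets a neighbourhood of $0$. The dichotomy is designed exactly to circumvent this — Taylor's formula is invoked only when the relevant segment stays at distance $\ge|z|$ from the origin, while in the complementary range the segment is short relative to $|z|$, so the crude pointwise bound furnished by the growth hypothesis \eqref{esti1} already suffices. This is also the reason why \eqref{esti1} must be postulated separately for $\alpha\ge 1$, whereas for $\alpha<1$ it is a consequence of \eqref{d2}.
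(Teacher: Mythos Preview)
Your proof is correct and follows essentially the same strategy as the paper: derive \eqref{esti1} from \eqref{d2} when $\alpha<1$, then split into a ``far'' case handled by Taylor expansion plus the Hessian decay, and a ``near'' case handled by the pointwise growth bound \eqref{esti1}. The only cosmetic differences are that the paper's dichotomy is phrased in terms of the point $y_0$ on the segment $\{x+tz:t\in[-1,1]\}$ closest to the origin (comparing $|y_0|$ with $|z|$) rather than comparing $|x|$ with $2|z|$, and that the paper does not separate out the case $\alpha\ge 2$ explicitly; your treatment of that case is a harmless addition, since in the paper's applications one always has $\alpha<2$.
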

\begin{proof}
First we recall that all norms are equivalent on the space of $N\times N$ matrices, in particular \eqref{d2} also holds for the norm
\[|D^2u(x)|=\max\big\{|\lambda|:\,\lambda \ \text{eigenvalue of $D^2u(x)$}\big\}\]
We show that when $\alpha \in (0,1)$ assumption~\eqref{d2} implies~\eqref{esti1}.
For all $x\in\R^N\setminus\{0\}$ we set $\xi=x/|x|$ and denote by $u_\xi$, $u_{\xi\xi}$ the first and second derivatives, respectively, of~$u$ along the direction~$\xi$. By~\eqref{d2} we have
\begin{align*}
|u_\xi(x)| &= \Big|u_\xi(0)+|x|\int_0^1u_{\xi\xi}(tx)\,dt\Big| \\
&\le |u_\xi(0)|+|x|\int_0^1\frac{C}{\, |tx|^\alpha \,}\,dt \\
&\le |D u(0)|+\frac{C}{\, 1-\alpha \,} \, |x|^{1-\alpha}.
\end{align*}
Similarly, we have
\begin{align*}
|u(x)| &= \Big|u(0)+|x|\int_0^1u_\xi(tx)\,dt\Big| \\
&\le |u(0)|+|x| \, |D u(0)|+|x|\int_0^1\frac{C}{\, 1-\alpha \,} \, |tx|^{1-\alpha}\,dt \\
&\le |u(0)|+|x| \, |D u(0)|+\frac{C}{\, 1-\alpha \,} \, \frac{\, |x|^{2-\alpha} \,}{2-\alpha}.
\end{align*}
By $2-\alpha>1$, taking $A,B>0$ big enough we have~\eqref{esti1}. So, in fact we can count on~\eqref{esti1} for all $\alpha>0$.
\vskip2pt
\noindent
Now we prove~\eqref{esti2}. For all $x,z\in\R^N$, $z\neq 0$ we define the segment
\[S:=\big\{x+tz:\,t\in[-1,1]\big\}.\]
By Taylor expansion, we have
\beq\label{l1}
|2u(x)-u(x+z)-u(x-z)|\le |z|^2 \max_{y\in S}|D^2 u(y)|.
\eeq
Let $y_0\in S$ be the unique point minimizing the distance from~$0$. We distinguish two cases:
\begin{itemize}[leftmargin=0.5cm]
\item if $|y_0|\ge |z| > 0$, then by assumption~\eqref{d2} we have
\[\max_{y\in S}|D^2 u(y)|\le\frac{C}{\, |y_0|^\alpha \,}.\]
This and~\eqref{l1} imply
\[|2u(x)-u(x+z)-u(x-z)|\le C \, |z|^{2-\alpha};\]
\item if $|y_0|<|z|$, then we have $|x| \le |x - y_0| +| y_0| < 2 \, |z|$ as well as $|x\pm z| < 3 \, |z|$, so by~\eqref{esti1} we have
\[|2u(x)-u(x+z)-u(x-z)|\le 4A+4B \, (3 \, |z|)^{2-\alpha}.\]
\end{itemize}
In both cases, for $A_1,B_1>0$ big enough (independently of~$x$) we get~\eqref{esti2}.
\end{proof}

\noindent
The next result highlights a condition to have $\fl u$ vanishing at infinity:

\begin{proposition}\label{vanish1}
Let $u\in C^2(\R^N)$ satisfy~\eqref{t1} with some $A,B,\sigma > 0$ as well as~\eqref{d2} with $\alpha =2 - 2s + \sigma > 0$. Then $\fl u(x)$ converges at all $x \in \R^N$, the representation~\eqref{fl-dd} holds, the function $\fl u$ is  continuous in~$\R^N$, and
\beq\label{vanish}
\lim_{|x|\to +\infty}\fl u(x)=0.
\eeq
\end{proposition}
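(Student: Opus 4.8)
The first three assertions require no new work: since hypothesis~\eqref{t1} is among the assumptions, Theorem~\ref{continuity} already gives the convergence of $\fl u(x)$ at every $x\in\R^N$, the non-singular representation~\eqref{fl-dd}, and the continuity of $\fl u$ on $\R^N$. Hence the plan is to establish only the decay~\eqref{vanish}, and the idea is to bound the integral on the right-hand side of~\eqref{fl-dd} after splitting it at a radius proportional to $|x|$.

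As a preliminary I would record that the exponent $\alpha=2-2s+\sigma$ satisfies $2-\alpha=2s-\sigma$, so that condition~\eqref{esti1} of Lemma~\ref{estimate} coincides with~\eqref{t1}; therefore Lemma~\ref{estimate} applies (and when $\alpha\in(0,1)$ it deduces~\eqref{esti1} from~\eqref{d2} on its own), producing constants $A_1,B_1>0$ with $|2u(x)-u(x+z)-u(x-z)|\le A_1+B_1\,|z|^{2s-\sigma}$ for all $x,z\in\R^N$. Now fix $x$ and set $R=|x|$, and write the integral in~\eqref{fl-dd} as $\int_{B_{R/2}}+\int_{B^c_{R/2}}$. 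On $B^c_{R/2}$ the integrand is dominated by the fixed summable function $(A_1+B_1|z|^{2s-\sigma})|z|^{-N-2s}$ (summable because $-N-2s<-N$ and, using $2s-\sigma-2s=-\sigma$, also $-N-\sigma<-N$), and its integral over $B^c_{R/2}$ equals a constant times $R^{-2s}+R^{-\sigma}$, hence tends to $0$ as $R\to+\infty$. On $B_{R/2}$, for $|z|<R/2$ every point $y$ of the segment $S=\{x+tz:t\in[-1,1]\}$ satisfies $|y|\ge R-|z|\ge R/2$, so~\eqref{d2} yields $\max_{y\in S}|D^2u(y)|\le C(R/2)^{-\alpha}$, and the Taylor estimate~\eqref{l1} gives $|2u(x)-u(x+z)-u(x-z)|\le C(R/2)^{-\alpha}|z|^2$; since $2-N-2s>-N$ (as $s<1$), the integral of $|z|^{2-N-2s}$ over $B_{R/2}$ is a constant times $R^{2-2s}$, so the contribution of $B_{R/2}$ is a constant times $R^{2-2s-\alpha}=R^{-\sigma}$, again vanishing as $R\to+\infty$. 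Adding the two estimates proves~\eqref{vanish}.

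The argument is computationally routine; the only point requiring attention is the bookkeeping of exponents, which is precisely why the hypothesis links $\alpha$ to $s$ and $\sigma$ through $\alpha=2-2s+\sigma$: this is exactly what upgrades the inner contribution from a mere $O(1)$ to $O(R^{-\sigma})$. I do not expect any genuine obstacle beyond tracking the constants and the conditions $0<s<1$ and $\sigma>0$ that ensure convergence of the relevant one-dimensional integrals.
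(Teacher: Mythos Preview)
Your proof is correct. The overall structure matches the paper's: reduce to~\eqref{vanish} via Theorem~\ref{continuity}, invoke Lemma~\ref{estimate} (after noting $2-\alpha=2s-\sigma$ so that~\eqref{esti1} is exactly~\eqref{t1}), and split the integral in~\eqref{fl-dd} into an inner part controlled by the Taylor/Hessian estimate~\eqref{l1} with~\eqref{d2}, and an outer part controlled by the uniform bound~\eqref{esti2}.

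The one genuine difference is the choice of splitting radius. The paper fixes $\delta>0$, chooses a \emph{fixed} radius $r_\delta$ so that the outer integral is below $\delta$ for every~$x$, and then takes $|x|$ large enough that $\sup_{B_{r_\delta}(x)}|D^2u|$ is small; this is a clean $\varepsilon$--$\delta$ argument but yields no rate. You instead let the split radius scale with~$|x|$ (taking $R/2=|x|/2$), which makes both pieces explicitly of order $|x|^{-\sigma}$ (plus $|x|^{-2s}$ from the constant term), hence you actually obtain the quantitative decay $\fl u(x)=O(|x|^{-\min(\sigma,2s)})$. Your route is thus slightly more informative; the paper's has the minor advantage of reusing the already-established inequality~\eqref{first_integral} verbatim rather than redoing the Taylor estimate on the moving ball.
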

\begin{proof}
In view of Theorem~\ref{continuity} we only have to prove~\eqref{vanish}. To this aim we fix $\delta > 0$ and we observe that assumption~\eqref{t1} may be rewritten as~\eqref{esti1}. Hence by Lemma~\ref{estimate} there exist constants $A_1,B_1$ s.t.\ \eqref{esti2} holds. Since $2s+\alpha-1 = 1 + \sigma >1$, we can find $r_\delta>0$ so large that
\[\int_{r_\delta}^{+\infty}\frac{\, A_1+B_1 \, r^{2-\alpha} \,}{r^{1+2s}}\,dr<\delta,\]
hence by~\eqref{esti2} we have
\beq\label{last_integral}
\Big|\int_{B_{r_\delta}^c}\frac{\, 2u(x)-u(x+z)-u(x-z) \,}{|z|^{N+2s}}\,dz\Big| \le N\omega_N\int_{r_\delta}^{+\infty}\frac{\, A_1+B_1 \, r^{2-\alpha} \,}{r^{1+2s}}\,dr < N\omega_N \, \delta.
\eeq
Besides, letting $r = r_\delta$ in~\eqref{first_integral}, we have
\beq\label{interior}
\Big|\int_{B_{r_\delta}}\frac{\, 2u(x)-u(x+z)-u(x-z) \,}{|z|^{N+2s}}\,dz\Big| \le N\omega_N \, \frac{r_\delta^{2-2s}}{\, 2-2s \,}\sup_{\eta\in B_{r_\delta}(x)}|D^2u(\eta)|.
\eeq
By assumption~\eqref{d2} we have, in particular,
$$
\lim_{|x| \to +\infty}
\Big(
\sup_{\eta\in B_{r_\delta}(x)}|D^2 u(\eta)|
\Big)
=
0
$$
and therefore there exists $R_\delta$ s.t.\ for all $x \in B^c_{R_\delta}$ we have
\[\sup_{\eta\in B_{r_\delta}(x)}|D^2 u(\eta)|<\frac\delta{\, r_\delta^{2 - 2s} \,}.\]
Using~\eqref{last_integral}, \eqref{interior}, and the representation~\eqref{fl-dd}, we have for all $x \in B^c_{R_\delta}$
\[\big|\fl u(x)\big| \le \frac{\, C_{N,s} \,}{2} \, N\omega_N \, \Big(1+\frac{1}{\, 2-2s \,}\Big) \, \delta.\]
Since $\delta>0$ is arbitrary, \eqref{vanish} follows.
\end{proof}

\section{The fractional heat equation}\label{sec3}

\noindent
For any function $u\colon\R^N\times\R^+_0\to\R$ and for each point $(x,t)\in\R^N\times\R^+_0$ we will denote by $\fl u(x,t)$ the fractional Laplacian of $u(\cdot,t)$ at~$x$. We will denote by $Du$ and $D^2u$ the gradient and Hessian matrix, respectively, of~$u$ with respect to the space variable~$x$, and by~$u_t$ the derivative with respect to the time variable~$t$.
\vskip2pt
\noindent
The fractional heat equation reads
\beq\label{fhe}
u_t+\fl u=0 \ \text{in $\R^N\times\R^+_0$.}
\eeq
The following definitions explain what we mean by a (super-, sub-) solution:

\begin{definition}\label{fhe-sol}
Let $u\in C(\R^N\times\R^+_0)$ be a function s.t.\ $u(x,\cdot)\in C^1(\R^+_0)$ for all $x\in\R^N$, and $\fl u$ converges at any $(x,t)\in\R^N\times\R^+_0$. We say that $u$ is
\begin{enumroman}
\item\label{fhe-sol1} a solution of equation~\eqref{fhe}, if for all $(x,t)\in\R^N\times\R^+_0$
\[u_t(x,t)+\fl u(x,t)=0;\]
\item\label{fhe-sol2} a (strict) super-solution of~\eqref{fhe}, if for all $(x,t)\in\R^N\times\R^+_0$
\[u_t(x,t)+\fl u(x,t)\ge0 \ (>0);\]
\item\label{fhe-sol3} a (strict) sub-solution of~\eqref{fhe}, if for all $(x,t)\in\R^N\times\R^+_0$
\[u_t(x,t)+\fl u(x,t)\le0 \ (<0).\]
\end{enumroman}
\end{definition}

\noindent
Further, given an initial datum $u_0\in C(\R^N)$, we will consider problem~\eqref{ivp}:

\begin{definition}\label{ivp-sol}
Let $u \in C(\R^N \times \R^+)$ be a solution of equation~\eqref{fhe} s.t.\ $u(x,0) = u_0(x)$ for all $x\in\R^N$. Then we say that $u$ is a solution of the initial-value problem~\eqref{ivp}. Super- and sub-solutions of~\eqref{ivp} are defined analogously.
\end{definition}

\noindent
The above notions are usually referred to as {\em strong} solutions, see for instance~\cite[Definition~1.3]{BPSV}. Note that solutions of equation~\eqref{fhe} are only defined for~$t>0$, while solutions of problem~\eqref{ivp} are defined for $t\ge 0$.
\vskip2pt
\noindent
A major tool in the study of~\eqref{fhe} is the {\em fractional heat kernel}, defined by means of the Fourier integral
\beq\label{fhk}
p(x,t)=\frac{1}{(2\pi)^N}\int_{\R^N}e^{ix\cdot\xi-t|\xi|^{2s}}\,d\xi
\eeq
for all $(x,t)\in\R^N\times\R^+_0$. From past and recent literature we know that $p\in C^\infty(\R^N\times\R^+_0)$, takes on positive values, satisfies for all $t\in\R^+_0$
\beq\label{volume}
\int_{\R^N}p(x,t)\,dx=1,
\eeq
and is a solution of equation~\eqref{fhe} ($p$ can be seen as a solution of the initial-value problem~\eqref{ivp} with $u_0$ replaced by the Dirac delta centered at~$0$). Moreover, $p$ satisfies the following two-sided estimate for all $(x,t)\in\R^N\times\R^+_0$:
\beq\label{ke0}
C_1 \, \min\Big\{\frac{1}{\, t^\frac{N}{2s} \,},\,\frac{t}{\, |x|^{N+2s} \,}\Big\}\le p(x,t)\le
C_2 \, \min\Big\{\frac{1}{\, t^\frac{N}{2s} \,},\,\frac{t}{\, |x|^{N+2s} \,}\Big\} \quad
(0<C_1<C_2),
\eeq
see~\cite{BPSV}, \cite[Eq.\ (4)]{BGR}, \cite[Eq.\ (1.1)]{CKS}. It is understood that $\min\{a,\frac{\, t \,}0\} = a$ for all $a \in \mathbb R$ and $t \in \mathbb R^+_0$, so \eqref{ke0} makes sense also at~$x = 0$. The same convention holds in~\eqref{dps} and~\eqref{dpt} below. In the next proposition we state an estimate from above of the spatial derivatives of~$p(x,t)$ of any order, as well as of the first-order time derivative. For any multi-index $\alpha=(\alpha_1,\ldots, \alpha_N)$ of height $|\alpha|= \alpha_1 + \dots + \alpha_N = k\ge 1$ we set, as usual,
\[D^\alpha p(x,t)=\frac{\partial^k}{\partial x_1^{\alpha_1}\ldots \partial x_N^{\alpha_N}}\,p(x,t),\]
while $p_t(x,t)$ denotes the derivative of $p$ with respect to $t$.

\begin{proposition}\label{dp}
The fractional heat kernel $p$ defined in~\eqref{fhk} satisfies the following estimates:
\begin{enumroman}
\item\label{dp1} for all integer $N,k\ge 1$ there exists $C_{N,k}>0$ s.t.\ for all multi-index $\alpha$ with $|\alpha|=k$ and all $(x,t)\in\R^N\times\R^+_0$
\beq\label{dps}
|D^\alpha p(x,t)|\le C_{N,k}\,\min\Big\{\frac1{t^\frac{N + k}{2s}},\,\frac{t}{|x|^{N+2s+k}}\Big\};
\eeq
\item\label{dp2} there exists $C>0$ s.t.\ for all $(x,t)\in\R^N\times\R^+_0$
\beq\label{dpt}
|p_t(x,t)| \le C \, \min\Big\{\frac1{t^{\frac{N}{2s} + 1} \,}, \, \frac1{|x|^{N + 2s}}\Big\}.
\eeq
\end{enumroman}
\end{proposition}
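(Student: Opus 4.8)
The plan is to reduce both estimates to pointwise bounds on $P(y):=p(y,1)$ and its space derivatives, exploiting the self-similarity of the kernel. Substituting $\xi=t^{-1/(2s)}\eta$ in~\eqref{fhk} gives, for every $(x,t)\in\R^N\times\R^+_0$,
\[
p(x,t)=t^{-\frac N{2s}}\,P\big(t^{-\frac1{2s}}x\big),\qquad P\in C^\infty(\R^N),
\]
and differentiating pulls out a factor $t^{-1/(2s)}$ for each space derivative, so that for $|\alpha|=k$
\[
D^\alpha p(x,t)=t^{-\frac{N+k}{2s}}(D^\alpha P)(y),\qquad
p_t(x,t)=-\,t^{-\frac N{2s}-1}\Big(\tfrac N{2s}\,P(y)+\tfrac1{2s}\,y\cdot DP(y)\Big),
\]
with $y=t^{-1/(2s)}x$. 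Since $\tfrac{N+2s+k}{2s}-\tfrac{N+k}{2s}=1$, a one-line exponent count then shows that~\eqref{dps} and~\eqref{dpt} are immediate consequences of the single family of pointwise bounds
\[
|D^\alpha P(y)|\le C_k\,\min\{1,\,|y|^{-(N+2s+k)}\}\qquad(y\in\R^N,\ |\alpha|=k\ge0),
\]
applied with $k\ge1$ for~\eqref{dps} and with $k=0,1$ for~\eqref{dpt} (using $|y\cdot DP(y)|\le|y|\,|DP(y)|$); the two branches of the minimum correspond precisely to the regimes $|x|\le t^{1/(2s)}$ and $|x|\ge t^{1/(2s)}$, and the convention adopted at $x=0$ fits the first branch. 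So the whole proof comes down to these bounds on $P$.

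For $|y|\le1$ they hold trivially because $P\in C^\infty(\R^N)$. For $|y|\ge1$ and $k=0$, $|P(y)|\le C|y|^{-(N+2s)}$ is just the $t=1$ instance of~\eqref{ke0}, which I may use as given; hence the one thing that really needs proof is the decay of the derivatives, $|D^\alpha P(y)|\le C_k|y|^{-(N+2s+k)}$ for $|y|\ge1$ and $k=|\alpha|\ge1$. I would start from the Fourier representation $D^\alpha P(y)=(2\pi)^{-N}\int_{\R^N}(i\xi)^\alpha e^{iy\cdot\xi}e^{-|\xi|^{2s}}\,d\xi$ and localise near $\xi=0$ with a cut-off $\chi\in C^\infty_c(\R^N)$, supported in $B_1$ and equal to $1$ near the origin. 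On the support of $1-\chi$ the amplitude $(i\xi)^\alpha(1-\chi(\xi))e^{-|\xi|^{2s}}$ is smooth with all derivatives decaying faster than any power of $|\xi|$ (the super-exponential decay of $e^{-|\xi|^{2s}}$ absorbing the polynomial factor $\xi^\alpha$), so it belongs to the Schwartz space $\mathcal{S}$ and its inverse transform decays faster than any power of $|y|$. For the remaining, compactly supported amplitude $\chi(\xi)(i\xi)^\alpha e^{-|\xi|^{2s}}$ I would use the everywhere-convergent expansion $e^{-|\xi|^{2s}}=\sum_{j\ge0}\frac{(-1)^j}{j!}|\xi|^{2sj}$, fixing $M$ large enough that the tail $R_M(\xi)=e^{-|\xi|^{2s}}-\sum_{j=0}^{M}\frac{(-1)^j}{j!}|\xi|^{2sj}$ belongs to $C^m(\R^N)$ with $m>N+2s+k$, and splitting accordingly.

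The term $\chi(\xi)(i\xi)^\alpha R_M(\xi)$ lies in $C^m_c(\R^N)$, hence has transform $O(|y|^{-m})$, in particular $O(|y|^{-(N+2s+k)})$. Each term $\chi(\xi)(i\xi)^\alpha|\xi|^{2sj}$ with $1\le j\le M$ is handled by a routine scaling argument keyed to the homogeneity degree $k+2sj$ of $\xi^\alpha|\xi|^{2sj}$ — a dyadic decomposition of the $\xi$-integral about the scale $|\xi|\simeq1/|y|$, with the amplitude bound $|\xi|^{k+2sj}$ used directly at the small scales and sufficiently many integrations by parts used at the large scales (where the amplitude is smooth) — which yields a contribution $\lesssim|y|^{-(N+k+2sj)}\le|y|^{-(N+2s+k)}$ since $j\ge1$. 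Finally the $j=0$ term $\chi(\xi)(i\xi)^\alpha$ is $C^\infty_c$, with transform decaying faster than any power of $|y|$. Summing over $j$ and adding the $1-\chi$ contribution gives the required derivative decay, hence the pointwise bounds on $P$ and, by the first paragraph, \eqref{dps} and~\eqref{dpt}.

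The only genuinely delicate point is this last derivative decay as $|y|\to\infty$; the obstruction is that $\xi\mapsto|\xi|^{2s}$ is not smooth at the origin unless $2s$ is an even integer, so one cannot integrate by parts in $\xi$ arbitrarily many times. The device above circumvents this by peeling off the finitely many low-regularity homogeneous pieces $|\xi|^{2sj}$ and estimating each one by scaling, leaving a remainder of arbitrarily high (but finite) smoothness. Everything else — the self-similar reduction, the trivial small-$|y|$ case, and the reassembly of the $t$- and $|x|$-powers — is bookkeeping. As an alternative to the Fourier computation one could instead derive the derivative bounds from~\eqref{ke0}, the equation $p_t+\fl p=0$, and interior parabolic regularity for the fractional heat operator, carefully tracking the parabolic scaling $x\sim\lambda$, $t\sim\lambda^{2s}$ and the nonlocal tail contributions; but that route requires more machinery.
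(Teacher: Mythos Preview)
Your argument is correct and follows a genuinely different route from the paper. Both proofs share the self-similar reduction to bounds on $P=p(\cdot,1)$ and the derivation of~\eqref{dpt} from~\eqref{dps} via the scaling identity, but they diverge at the core step $|D^\alpha P(y)|\lesssim|y|^{-(N+2s+k)}$. The paper exploits the radial structure: it writes $P(x)=(2\pi)^{-N/2}F_N(|x|)$ with $F_N$ a Bessel-type integral, proves a recursion expressing $D^kF_N$ as a linear combination of $F_{N+2j}$ (Lemma~\ref{fn1}), invokes the Blumenthal--Getoor asymptotic $r^{N+2s}F_N(r)\to\ell_{N,0}$ from~\cite{BG} (Lemma~\ref{fn2}), and then passes from radial derivatives to $D^\alpha P$ by the chain rule. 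Your approach bypasses Bessel functions entirely, working directly on the Fourier side: localise near $\xi=0$, Taylor-expand $e^{-|\xi|^{2s}}$ to peel off the finitely many non-smooth homogeneous pieces $|\xi|^{2sj}$, estimate each by a dyadic/scaling argument, and leave a $C^m_c$ remainder with rapidly decaying transform. The paper's route yields sharper information (the exact leading asymptotic constants $\ell_{N,k}$) at the cost of relying on special-function identities and the external result~\cite{BG}; your route is more self-contained and more robust --- it would apply unchanged to anisotropic or non-radial L\'evy symbols of comparable regularity --- but gives only upper bounds. The ``routine scaling argument'' you cite for the homogeneous pieces is indeed standard Littlewood--Paley bookkeeping, though in a fully written-out proof it would deserve a couple of displayed lines.
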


\noindent
Similar estimates are found, for instance, in \cite[(2.2) and Lemma 2.1]{IKM}. To make the paper self-contained, a proof of Proposition~\ref{dp} is given in the Appendix.
\vskip2pt
\noindent
As said in the Introduction, estimate~\eqref{ke0} yields for $p(\cdot,t)$ a polynomial decay as $|x|\to +\infty$, which leads us to consider solutions of problem~\eqref{ivp} satisfying the growth condition~\eqref{gc},
where $B>0$ and $\sigma \in (0,2s)$ are constants, and $A\in C(\R^+)$ is a positive function s.t.~\eqref{att} holds. We will see that such a solution exists, provided the initial datum $u_0$ satisfies an analogous growth condition for all $x\in\R^N$, namely~\eqref{gc0} with $A_0,B_0>0$ and $\sigma \in (0,2s)$. First we prove a technical lemma on sub-solutions:

\begin{lemma}\label{max}
Let $u$ be a sub-solution of~\eqref{fhe}, attaining its maximum at $(x_0,t_0)\in\R^N\times\R^+_0$. Then $u(\cdot,t_0)$ is constant in~$\R^N$.
\end{lemma}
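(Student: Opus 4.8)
The plan is to argue by contradiction using the non-singular representation \eqref{fl-dd}. Suppose $u(\cdot,t_0)$ is not constant, so there exists $\bar z\in\R^N$ with $u(x_0+\bar z,t_0)\neq u(x_0-\bar z,t_0)$, or at least some point where $u(\cdot,t_0)<u(x_0,t_0)$. Since $(x_0,t_0)$ is a maximum point of $u$, we have $u(x_0,t_0)\ge u(y,t_0)$ for every $y\in\R^N$; hence the integrand
\[\frac{2u(x_0,t_0)-u(x_0+z,t_0)-u(x_0-z,t_0)}{|z|^{N+2s}}\]
is nonnegative for all $z$, and strictly positive on a set of positive measure (a neighbourhood of $\bar z$, by continuity of $u(\cdot,t_0)$). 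Therefore $\fl u(x_0,t_0)>0$, where I use that $\fl u$ is assumed to converge at $(x_0,t_0)$ by Definition~\ref{fhe-sol} so the representation is a genuine real number.

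Next I would exploit that $t_0$ is also a maximizer in time. Two cases arise. If $t_0\in\R^+_0=(0,+\infty)$, then $t\mapsto u(x_0,t)$ has an interior maximum at $t_0$ (recalling $u(x_0,\cdot)\in C^1(\R^+_0)$), so $u_t(x_0,t_0)=0$. Combined with $\fl u(x_0,t_0)>0$ this gives $u_t(x_0,t_0)+\fl u(x_0,t_0)>0$, contradicting the sub-solution inequality in Definition~\ref{fhe-sol}\ref{fhe-sol3}. If instead $t_0=0$, then the maximum over $t\ge 0$ at the left endpoint forces $u_t(x_0,0)\le 0$ (a decreasing-at-the-start condition), and again $u_t(x_0,0)+\fl u(x_0,0)>0$, contradicting sub-solutionality. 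In either case we reach a contradiction, so $u(\cdot,t_0)$ must be constant.

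One technical point deserves care: the representation \eqref{fl-dd} is only established in Theorem~\ref{continuity} and Proposition~\ref{vanish1} under growth/regularity hypotheses on $u$ that are not assumed in Lemma~\ref{max}. However, Definition~\ref{fhe-sol} already \emph{requires} $\fl u(x_0,t_0)$ to converge in the sense of Definition~\ref{fldef}, i.e.\ the limit
\[\lim_{\eps\to 0^+}\int_{B^c_\eps(x_0)}\frac{u(x_0,t_0)-u(y,t_0)}{|x_0-y|^{N+2s}}\,dy\]
exists and is finite. Since $(x_0,t_0)$ is a global maximum, the integrand $u(x_0,t_0)-u(y,t_0)$ is nonnegative, so each truncated integral is nonnegative and the limit is $\ge 0$; moreover if $u(\cdot,t_0)$ is not constant, the integrand is strictly positive on a set of positive measure disjoint from a small ball about $x_0$, forcing the limit to be strictly positive. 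Thus I can work directly with the principal-value definition and avoid invoking \eqref{fl-dd} altogether — this is the step I expect to be the main (though minor) obstacle: making sure the positivity of $\fl u(x_0,t_0)$ follows from the bare Definition~\ref{fldef} rather than from the non-singular representation, which is not available here.
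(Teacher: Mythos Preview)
Your approach is correct and essentially coincides with the paper's own proof: the paper also works directly with the principal-value integrals $I_\eps=\int_{B^c_\eps(x_0)}\frac{u(x_0,t_0)-u(y,t_0)}{|x_0-y|^{N+2s}}\,dy$, noting that they are non-negative (because $(x_0,t_0)$ is a global maximum) while their limit $\fl u(x_0,t_0)/C_{N,s}$ is $\le 0$ (since $u_t(x_0,t_0)=0$ at an interior time and $u$ is a sub-solution), which forces $I_\eps\equiv 0$ and hence $u(\cdot,t_0)$ constant --- exactly the argument you arrive at in your third paragraph. Your case distinction on $t_0=0$ is unnecessary (and its treatment would be problematic, since $u(x_0,\cdot)$ is only assumed $C^1$ on $(0,\infty)$): by hypothesis $t_0\in\R^+_0=(0,+\infty)$, so only the interior-maximum case ever occurs.
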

\begin{proof}
Since $u(x_0,\cdot)$ has a maximum at $t_0$, we have $u_t(x_0,t_0)=0$. Since $u$ is a sub-solution of~\eqref{fhe}, we have $\fl u(x_0,t_0)\le 0$. Set for all $\eps>0$
\[I_\eps=\int_{B^c_\eps(x_0)}\frac{\, u(x_0,t_0)-u(y,t_0) \,}{|x_0-y|^{N+2s}}\,dy\le 0\]
(note that the integrand is non-negative in~$\R^N$). Then, $\eps\mapsto I_\eps$ is a non-negative, non-increasing mapping, while
\[\lim_{\eps\to 0^+}I_\eps=\frac{\fl u(x_0,t_0)}{C_{N,s}}\le 0.\]
Thus we have $I_\eps=0$ for all $\eps>0$, which in turn implies $u(y,t_0)=u(x_0,t_0)$ for all $y\in\R^N$.
\end{proof}

\noindent
Now we prove a weak maximum principle for sub-solutions of problem~\eqref{ivp}, under a one-sided version of condition~\eqref{gc}:

\begin{theorem}\label{mp}
Let $u$ be a sub-solution of problem~\eqref{ivp} s.t.\ for all $(x,t)\in\R^N\times\R^+$
\[u(x,t)\le A(t)+B \, |x|^{2s-\sigma}\] where $B>0$ and $\sigma \in (0,2s)$ are
constants, and $A\in C(\R^+)$ is a positive function satisfying~\eqref{att}.
Then
\[\sup_{(x,t)\in\R^N\times\R^+}u(x,t)=\sup_{x\in\R^N}u_0(x).\]
\end{theorem}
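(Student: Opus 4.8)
The plan is a comparison argument against a slowly growing spatial barrier. The inequality $\sup_{\R^N\times\R^+}u\ge\sup_{\R^N}u_0$ is immediate because $u(\cdot,0)=u_0$, and if $\sup_{\R^N}u_0=+\infty$ there is nothing more to prove; so I assume $M:=\sup_{\R^N}u_0<+\infty$ and aim at $u(x,t)\le M$ for all $(x,t)$. Fix an exponent $\beta\in(2s-\sigma,2s)$, which exists since $\sigma\in(0,2s)$, and set $g(x)=(1+|x|^2)^{\beta/2}$. Then $g\in C^2(\R^N)$, $1\le g(x)\le 1+|x|^\beta$ by subadditivity of $t\mapsto t^{\beta/2}$, and a direct computation gives $|D^2g(x)|\le C\,|x|^{-(2-\beta)}$ for $x\ne0$. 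Hence $g$ satisfies \eqref{t1} with growth exponent $\beta=2s-(2s-\beta)$ and \eqref{d2} with $\alpha=2-\beta=2-2s+(2s-\beta)$, so Proposition~\ref{vanish1} applies to $g$: the fractional Laplacian $\fl g$ converges at every point, is continuous, and vanishes at infinity; in particular $c_0:=\sup_{\R^N}|\fl g|<+\infty$.

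Next I fix $\lambda>c_0$ and $\eta>0$ and consider $v(x,t)=u(x,t)-M-\eta\,(g(x)+\lambda t)$ on $\R^N\times\R^+$. Since $\fl u(\cdot,t)$ converges at every point for $t>0$ (it is part of the definition of a sub-solution) and $\fl g$ converges, $v$ is an admissible sub-solution of \eqref{fhe} in the sense of Definition~\ref{fhe-sol}; moreover it is a \emph{strict} sub-solution, since on $\R^N\times\R^+_0$
\[v_t+\fl v=(u_t+\fl u)-\eta(\lambda+\fl g)\le-\eta(\lambda-c_0)<0.\]
At $t=0$ we have $v(x,0)=u_0(x)-M-\eta g(x)\le-\eta g(x)<0$. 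Finally, using the one-sided growth bound on $u$, the sublinearity \eqref{att} of $A$, and $\beta>2s-\sigma$, we get $v(x,t)\le\bigl(A(t)-\eta\lambda t\bigr)+\bigl(B\,|x|^{2s-\sigma}-\eta g(x)\bigr)-M$, where both brackets are continuous, bounded above, and tend to $-\infty$ (the first as $t\to+\infty$, the second as $|x|\to+\infty$). Therefore $v$ is bounded above on $\R^N\times\R^+$ and tends to $-\infty$ as $|x|+t\to+\infty$, so $\sup_{\R^N\times\R^+}v$ is attained at some point $(x_1,t_1)$.

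To finish, suppose this supremum were positive. Then $t_1>0$ (because $v(\cdot,0)<0$), so $(x_1,t_1)\in\R^N\times\R^+_0$ is a global maximum of the sub-solution $v$, and Lemma~\ref{max} forces $v(\cdot,t_1)$ to be constant on $\R^N$; consequently $\fl v(x,t_1)=0$ for every $x$. Moreover, for each fixed $x$ the $C^1$ function $t\mapsto v(x,t)$ attains its maximum at the interior point $t_1$, hence $v_t(x,t_1)=0$. Then $v_t(x,t_1)+\fl v(x,t_1)=0$ for every $x$, contradicting the strict inequality above. Thus $\sup_{\R^N\times\R^+}v\le0$, i.e.\ $u(x,t)\le M+\eta\,(g(x)+\lambda t)$ for all $(x,t)$; letting $\eta\to0^+$ with $(x,t)$ fixed yields $u(x,t)\le M$, as desired. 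I expect the only real subtlety to be the choice of the barrier: $g$ must grow strictly slower than $|x|^{2s}$ so that Proposition~\ref{vanish1} yields a \emph{bounded} $\fl g$, yet strictly faster than $|x|^{2s-\sigma}$ so that $v$ is coercive to $-\infty$ in space; the remaining steps are the standard strict-sub-solution maximum-principle comparison.
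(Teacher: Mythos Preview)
Your proof is correct and follows essentially the same approach as the paper's: both construct a spatial barrier of growth $|x|^\beta$ with $\beta\in(2s-\sigma,2s)$, invoke Proposition~\ref{vanish1} to bound its fractional Laplacian, add a linear-in-$t$ term to obtain a strict super-solution, subtract a small multiple of it from $u$ to get a strict sub-solution whose maximum is attained at some $t_1>0$, and then derive a contradiction via Lemma~\ref{max}. The only cosmetic differences are that the paper sets up a direct contradiction (choosing the multiple $\mu$ from a hypothetical bad point), whereas you let $\eta\to0$ at the end, and the paper takes $v_0(x)=|x|^{2s-\sigma_0}$ outside $B_1$ while you use the globally smooth $g(x)=(1+|x|^2)^{\beta/2}$.
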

\begin{proof}
Avoiding trivial cases, we may assume that $u_0$ is bounded from above. We argue by contradiction: assume that there exists $(x_1,t_1)\in\R^N\times\R^+_0$ s.t.
\beq\label{mpabs}
u(x_1,t_1)>\sup_{x\in\R^N}u_0(x)+\eps_1 \ (\eps_1>0).
\eeq
Let $v_0\in C^2(\R^N)$ be a positive function s.t.\ $v_0(x)=|x|^{2s-\sigma_0}$ for all $x\in B^c_1$ and some $\sigma_0\in(0,\sigma)$. By direct computation we have for all $x\in\R^N \setminus \{\, 0 \,\}$
\[|D^2v_0(x)|\le\frac{C}{\, |x|^{2-2s+\sigma_0} \,} \ (C>0).\]
Proposition~\ref{vanish1} implies that $\fl v_0\in C(\R^N)$ and $\fl v_0(x)\to 0$ as $|x|\to+\infty$. Let us pick $M>0$ s.t.
\[\inf_{x\in\R^N}\fl v_0(x)>-M.\]
Now set for all $(x,t)\in\R^N\times\R^+$
\[v(x,t)=v_0(x)+Mt.\]
It is easily seen that $v\in C(\R^N\times\R^+)$ is a positive strict super-solution of problem~\eqref{ivp} with initial datum~$v_0$. Set $\mu=\eps_1/v(x_1,t_1)>0$ and for all $(x,t)\in\R^N\times\R^+$
\[w(x,t)=u(x,t)-\mu \, v(x,t).\]
Then $w\in C(\R^N\times\R^+)$ is a strict sub-solution of problem~\eqref{ivp} with initial datum $u_0-\mu \, v_0$. Moreover, for all $(x,t)\in\R^N\times\R^+$ we have
\[w(x,t)\le(A(t)-\mu Mt)+(B \, |x|^{2s-\sigma}-\mu \, v_0(x)),\]
and the latter tends to $-\infty$ as soon as either $|x| \to +\infty$ or $t \to +\infty$ (recall that \eqref{att} holds, and that $2s-\sigma<2s-\sigma_0$). So, we can find $(x_0,t_0)\in\R^N\times\R^+$ s.t.
\[w(x_0,t_0)=\sup_{(x,t)\in\R^N\times\R^+}w(x,t).\]
More precisely we must have $t_0 > 0$ by~\eqref{mpabs} and because
\[w(x_0,t_0)\ge w(x_1,t_1)=u(x_1,t_1)-\eps_1>\sup_{x\in\R^N}u(x,0)\ge\sup_{x\in\R^N}w(x,0).\]
But then $w(\cdot,t_0)$ is constant by Lemma~\ref{max}, and therefore $\fl w(x_0,t_0)=0$. Furthermore $w_t(x_0,t_0)=0$ because such point is a maximizer, thus
\[w_t(x_0,t_0)+\fl w(x_0,t_0)=0,\]
which contradicts the fact that $w$ is a strict sub-solution. This concludes the proof.
\end{proof}

\noindent
The canonical solution of the initial-value problem~\eqref{ivp} is defined by means of the fractional heat kernel~$p$: given an initial datum $u_0\in C(\R^N)$ satisfying~\eqref{gc0}, we set for all $(x,t)\in\R^N\times\R^+_0$
\beq\label{cs}
u(x,t)=\int_{\R^N}u_0(y)\,p(x-y, \, t)\,dy,
\eeq
while we set $u(x,0)=u_0(x)$ for all $x\in\R^N$. The following is the main result of the paper, in which we prove that $u$ is actually a solution of~\eqref{ivp} and that it is unique under condition~\eqref{gc}:

\begin{theorem}\label{exun}
Let $u_0\in C(\R^N)$ satisfy the bound~\eqref{gc0}, and let $u$ be defined by~\eqref{cs}. Then:
\begin{enumroman}
\item\label{obeys} $u$ obeys to the bound~\eqref{gc} with a convenient function $A(t)$ satisfying~\eqref{att}, and some constant $B$;
\item\label{solves} $u$ is a solution of the initial-value problem~\eqref{ivp};
\item\label{uniqueness} if $u_1,u_2$ are solutions of~\eqref{ivp} corresponding to the same initial value~$u_0$ and s.t.\
\[|u_i(x,t)|\le A_i(t)+B_i \, |x|^{2s-\sigma} \quad \mbox{for $(x,t) \in \R^N \times \R^+$ and $i = 1,2$}\]
with two functions $A_1,A_2$ satisfying~\eqref{att} and some constants $B_1,B_2$, then $u_1 = u_2$.
\end{enumroman}
\end{theorem}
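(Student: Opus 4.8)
The plan is to treat the three claims in order: claim~\ref{obeys} by a direct estimate on the convolution \eqref{cs} using the kernel bound \eqref{ke0}; claim~\ref{solves} as the technical core; and claim~\ref{uniqueness} at once from the maximum principle of Theorem~\ref{mp}. For claim~\ref{obeys}, I would use $\int_{\R^N}p(\cdot,t)\,dy=1$ and \eqref{gc0} to write $|u(x,t)|\le A_0+B_0\int_{\R^N}|x-z|^{2s-\sigma}p(z,t)\,dz$ after the substitution $y=x-z$, then apply the elementary inequality $|x-z|^{2s-\sigma}\le c\,(|x|^{2s-\sigma}+|z|^{2s-\sigma})$ with $c=\max\{1,2^{2s-\sigma-1}\}$ (legitimate since $2s-\sigma\in(0,2)$). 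The $|x|^{2s-\sigma}$ term produces the space part of \eqref{gc}, and $\int_{\R^N}|z|^{2s-\sigma}p(z,t)\,dz$ is estimated by splitting at $|z|=t^{1/(2s)}$, using $p(z,t)\le C_2\,t^{-N/(2s)}$ on the inner ball and $p(z,t)\le C_2\,t\,|z|^{-N-2s}$ outside: both pieces are bounded by a constant times $t^{1-\sigma/(2s)}$. Thus \eqref{gc} holds with $A(t)=A_0+C'\,t^{1-\sigma/(2s)}$ and $B=c\,B_0$; since $\sigma/(2s)\in(0,1)$ the exponent $1-\sigma/(2s)$ lies in $(0,1)$, so $A$ is continuous on $\R^+$, $A(0)=A_0$ is compatible with $u(\cdot,0)=u_0$ via \eqref{gc0}, and \eqref{att} follows.

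For claim~\ref{solves} there are four points. (a) Regularity for $t>0$: differentiating \eqref{cs} under the integral sign — justified by the estimates \eqref{dps} and \eqref{dpt} of Proposition~\ref{dp} together with \eqref{gc0} — gives $u(\cdot,t)\in C^\infty(\R^N)$ and $u(x,\cdot)\in C^1(\R^+_0)$ with $u_t(x,t)=\int_{\R^N}u_0(y)\,p_t(x-y,t)\,dy$; the majorant in each case is built by splitting the $y$-integral into a bounded set around $y=x$ (where the kernel derivative is bounded, locally uniformly for $t$ in a compact subset of $\R^+_0$) and its complement, where the decay $|x-y|^{-N-2s-k}$ beats the polynomial growth of $u_0$. (b) Combined with claim~\ref{obeys}, this places $u(\cdot,t)$ in the hypotheses of Theorem~\ref{continuity}, so $\fl u(\cdot,t)$ converges at every point and admits the representation \eqref{fl-dd}. (c) The key identity is $\fl u(x,t)=\int_{\R^N}u_0(y)\,\fl p(x-y,t)\,dy$, obtained by inserting \eqref{cs} into \eqref{fl-dd} and applying Fubini; absolute integrability of the resulting double integral in $(y,w)$ follows by splitting the $w$-integral at $|w|=1$ — for $|w|\le 1$ a second-order Taylor estimate on $p(\cdot,t)$ with $|D^2p(v,t)|\le C_{N,2}\min\{t^{-(N+2)/(2s)},\,t\,|v|^{-N-2s-2}\}$ makes it converge (using $2-2s>0$), while for $|w|>1$ one uses $|2p-p(\cdot+w)-p(\cdot-w)|\le 2p+p(\cdot+w)+p(\cdot-w)$, changes variables, and absorbs the factor $|w|^{2s-\sigma}$ coming from the growth of $u_0$ into the weight $|w|^{-N-2s}$, the surviving exponents $-N-2s$ and $-N-\sigma$ both being $<-N$. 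Since $p$ solves \eqref{fhe} we have $\fl p(x-y,t)=-p_t(x-y,t)$, hence $\fl u(x,t)=-u_t(x,t)$, i.e.\ $u$ solves \eqref{fhe}. (d) Continuity up to $t=0$: for $x$ near a fixed $x_0$ and $t$ small, $u(x,t)-u_0(x_0)=\int_{\R^N}[u_0(x-z)-u_0(x_0)]\,p(z,t)\,dz$; split at $|z|=\rho$, use uniform continuity of $u_0$ on compacts for $|z|<\rho$ and $p(z,t)\le C_2\,t\,|z|^{-N-2s}$ (valid once $|z|\ge\rho>t^{1/(2s)}$) with \eqref{gc0} for $|z|\ge\rho$; letting $\rho$ and then $t$ be small gives $u(x,t)\to u_0(x_0)$, so $u\in C(\R^N\times\R^+)$.

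For claim~\ref{uniqueness}, set $w=u_1-u_2$; by linearity of $\fl$ (both $\fl u_i$ converge) $w$ is a solution, hence a sub-solution, of \eqref{fhe} with $w(\cdot,0)\equiv 0$, and $w(x,t)\le (A_1(t)+A_2(t))+(B_1+B_2)\,|x|^{2s-\sigma}$ with $A_1+A_2$ sublinear (and, as one checks, nonnegative since $A_i(t)\ge|u_i(0,t)|\ge 0$). Theorem~\ref{mp} then yields $\sup_{\R^N\times\R^+}w=\sup_x w(x,0)=0$, so $u_1\le u_2$; exchanging $u_1$ and $u_2$ gives the reverse inequality, whence $u_1=u_2$. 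The main obstacle in the whole argument is step (c): producing an honest integrable majorant for the double integral forces the near/far splitting in $w$ and the two distinct uses of the kernel estimates — the near part relying on $2-2s>0$, the far part on the exponents being strictly below $-N$. Everything else (the differentiations under the integral sign, the approximate-identity argument for $t\to 0^+$) is routine once Proposition~\ref{dp} and \eqref{ke0} are in hand.
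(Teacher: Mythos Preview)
Your proposal is correct and follows essentially the same route as the paper: the same splitting at the scale $t^{1/(2s)}$ and elementary inequality \eqref{elementary} for claim~\ref{obeys} (your substitution $z=x-y$ up front is a mild tidying of the paper's computation, yielding the same $A(t)=O(t^{1-\sigma/(2s)})$), the same Fubini argument with the near/far split at $|w|=1$ and the Taylor--vs.--triangle-inequality dichotomy for the key step~(c) of claim~\ref{solves}, the same approximate-identity argument for continuity at $t=0$, and the identical application of Theorem~\ref{mp} for claim~\ref{uniqueness}.
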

\begin{proof}
Preliminarily, we establish some bounds on the integrand in~\eqref{cs}. Fix $(x,t)\in\R^N\times\R^+_0$. Letting $R_t := t^\frac1{2s}$ we have
\beq\label{alternative}
\min\Big\{\frac{1}{\,t^\frac{N}{2s}\,},\,\frac{t}{\, |x-y|^{N+2s} \,}\Big\} =\begin{cases}
\displaystyle\frac{1}{\, t^\frac{N}{2s} \,} & \text{if $|x - y| < R_t$,} \\
\noalign{\medskip}
\displaystyle\frac{t}{\, |x-y|^{N+2s} \,} & \text{if $|x - y| \ge R_t$.}
\end{cases}
\eeq
Consequently, from assumption~\eqref{gc0} and estimate~\eqref{ke0} we have for all $x,y\in\R^N$ and all $t\in\R^+_0$
\beq\label{consequence}
|u_0(y) \, p(x-y, \, t)|\le C \, (1+|y|^{2s-\sigma})\,\Big(\frac1{\, t^\frac{N}{2s} \,} \, \chi_{B_{R_t}(x)}(y)+\frac{t}{\, |x-y|^{N+2s} \,} \, \chi_{B^c_{R_t}(x)}(y)\Big),
\eeq
with a convenient constant $C > 0$. Hence there exists a constant $C_0>0$ s.t.\ for all $(x,t) \in B_1(x_0) \times (t_0/2, \, t_0 + 1)$
\[|u_0(y) \, p(x-y, \, t)|\le\frac{C_0}{\, 1+|y|^{N+\sigma} \,}\]
from which we see that the convolution integral in~\eqref{cs} converges. So, $u(x,t)$ is well defined and continuous in $\R^N\times\R^+_0$. Using in a similar manner the estimate~\eqref{dpt} established for the time-derivative $p_t(x,t)$, we can also differentiate with respect to~$t$ in~\eqref{cs} and ensure that $u(x,\cdot)\in C^1(\R^+_0)$ for every $x \in\R^N$ with
\beq\label{ut}
u_t(x,t)=\int_{\R^N}u_0(y)\,p_t(x-y, \, t)\,dy.
\eeq
Now we prove~\ref{obeys}. We shall use the following elementary inequalities, holding for all $a,b\ge 0$:
\beq\label{elementary}
(a+b)^{2s - \sigma} \le
\begin{cases}
a^{2s - \sigma} +b^{2s - \sigma} &\mbox{if $2s - \sigma \in (0,1]$}\\
2^{2s - \sigma - 1} \, (a^{2s - \sigma} + b^{2s - \sigma}) &\mbox{if $2s - \sigma \in (1,2)$.}
\end{cases}
\eeq
From now on we denote by~$C>0$ a constant whose value may change from line to line. Integrating~\eqref{consequence} in~$dy$ over~$\R^N$ we find for all $x\in\R^N$ and $t\in\R^+_0$
\beq\label{half-done}
|u(x,t)| \le C \, \omega_N \, (1+(|x| + R_t)^{2s-\sigma})+ Ct \int_{B^c_{R_t}(x)}\frac{\, 1 + |y|^{2s - \sigma} \,}{\, |x - y|^{N + 2s} \,} \, dy,
\eeq
where $\omega_N = |B_1|$ and $R_t$ is defined as above. Hence we may focus on the last integral.  Since $|y| \le |x| + |y - x|$, using~\eqref{elementary} we have
\begin{align*}
\int_{B^c_{R_t}(x)}\frac{\, 1 + |y|^{2s - \sigma} \,}{\, |x - y|^{N + 2s} \,} \, dy &\le \int_{B^c_{R_t}(x)}\frac{\, 1 + (|x| + |y - x|)^{2s - \sigma} \,}{\, |x - y|^{N + 2s} \,} \, dy \\
&\le (1 + C \, |x|^{2s - \sigma}) \int_{B^c_{R_t}(x)} \frac{dy}{\, |x - y |^{N + 2s} \,} + C \int_{B^c_{R_t}(x)} \frac{dy}{\, |x - y |^{N + \sigma} \,} \\
&\le C \, (1 + |x|^{2s - \sigma}) \, \frac1{\, t \,} + C \, \frac1{\, t^\frac\sigma{2s} \,}.
\end{align*}
By plugging this estimate into~\eqref{half-done} we see that the prospective solution $u(x,t)$ satisfies the bound~\eqref{gc} for $t > 0$ with a convenient function $A(t)$ and some constant~$B$. Furthermore, since $u_0$ satisfies~\eqref{gc0} by assumption, we may replace $A(t)$ with $A_0 + A(t)$, and enlarge the constant~$B$ if necessary, so that the bound~\eqref{gc} holds for all $t \ge 0$, as claimed. We also get the sharper information that $A(t)$ not only satisfies~\eqref{att} but also $A(t) = O(t^{1 - \frac\sigma{2s}})$.
\vskip2pt
\noindent
Now we turn to~\ref{solves}. First, we need to prove that $u$ defined by~\eqref{cs} solves the fractional heat equation~\eqref{fhe} at any $(x,t)\in\R^N\times\R^+_0$. In what follows, we will omit dependence on both $x$ and $t$, which are considered fixed, so we will denote by $C=C_{x,t}>0$ a constant whose value may change from line to line. By~\eqref{ut} and recalling that $p$ solves~\eqref{fhe}, we see that
\beq\label{sol1}
u_t(x,t)=-\int_{\R^N}u_0(y) \, \flp(x-y,t)\,dy.
\eeq
To go further, it is essential to recall that $u(\cdot,t)$ satisfies the bound~\eqref{t1} and therefore, by Theorem~\ref{continuity}, the fractional Laplacian $\fl u(x,t)$ admits the representation~\eqref{fl-dd}. Clearly, by simple variable changes we have
\[u(x,t)=\int_{\R^N} u_0(x - y) \, p(y,t) \, dy,\]
\[u(x \pm z, \, t) = \int_{\R^N} u_0(x - y) \, p(y \pm z,t)\,dy \quad \mbox{for all $z\in\R^N$.}\]
By plugging these into the representation~\eqref{fl-dd} we arrive at an expression of $\fl u(x,t)$ by means of an iterated integral:
\begin{align}\label{iterated}
\fl u(x,t) &= \frac{\, C_{N,s} \,}2 \int_{\R^N}\frac{2u(x,t)-u(x+z, \, t)-u(x-z, \, t)}{|z|^{N+2s}}\,dz \\
\nonumber&= \frac{\, C_{N,s} \,}2 \int_{\R^N}\left(\int_{\R^N}u_0(x - y) \,\big(2p(y,t) - p(y + z, \, t) - p(y - z, \, t)\big) \, dy\right)\frac{dz}{\, |z|^{N + 2s} \,}.
\end{align}
In order to change the order of integration, we need to construct a majorant of
\[\varphi(y,z)=\Big|u_0(x - y) \,\frac{\, 2p(y,t) - p(y + z, \, t) - p(y - z, \, t) \,}{\, |z|^{N + 2s} \,}\Big|\]
summable in $\R^{2N}$. The first factor $u_0(x-y)$ is simply estimated by virtue of~\eqref{gc}. So we fix $y\in\R^N$ and we mainly consider the second factor, which is subject to the estimate~\eqref{dps} from Proposition~\ref{dp}:
\begin{itemize}[leftmargin=0.5cm]
\item for all $z\in B_1$ we have by Taylor expansion and~\eqref{dps} (with $|\alpha|=2$)
\begin{align*}
\varphi(y,z) &\le \frac{\, C \, (1+|y|^{2s-\sigma}) \,}{\,|z|^{N+2s-2}\,}\sup_{\eta\in B_1(y)}|D^2 p(\eta,t)| \\
&\le \frac{\, C \, (1+|y|^{2s-\sigma}) \,}{\,|z|^{N+2s-2} \, (1+|y|^{N+2s+2})\,};
\end{align*}
\item for all $z\in B_1^c$ we have by~\eqref{ke0} and~\eqref{alternative} (with $R_t=t^\frac{1}{2s}$ as above)
\[|p(y,t)| \le \frac{C}{\, 1 + |y|^{N + 2s} \,},\]
\[|p(y\pm z, \, t)|\le
\begin{cases}
C &\mbox{if $|y \pm z| < R_t$,} \\
\displaystyle\frac{C}{\, |y \pm z|^{N + 2s} \,} &\mbox{if $|y \pm z| \ge R_t$.}
\end{cases}\]
Thus, from
\begin{align*}
\varphi(y,z) \le C \, (1+|y|^{2s-\sigma}) \, \Big(\frac{\, 2 \, |p(y,t)| \,}{\,|z|^{N+2s}\,}+\frac{\, |p(y+z, \, t)| \,}{\,|z|^{N+2s}\,}+\frac{\, |p(y-z, \, t)| \,}{\,|z|^{N+2s}\,}\Big)
\end{align*}
we get
\begin{align*}
\varphi(y,z) &\le \frac{C \, (1+|y|^{2s-\sigma})}{\,|z|^{N+2s} \, (1+|y|^{N+2s})\,} \\
&+ \frac{C \, (1+|y|^{2s-\sigma}) \,}{\,|z|^{N+2s}\,} \, \chi_{B_{R_t}(-z)}(y) + \frac{\, C \, (1+|y|^{2s-\sigma}) \,}{\,|z|^{N+2s} \, |y+z|^{N+2s}\,} \, \chi_{B^c_{R_t}(-z)}(y) \\
&+ \frac{\, C \, (1+|y|^{2s-\sigma})\,}{\,|z|^{N+2s}\,} \, \chi_{B_{R_t}(z)}(y) + \frac{C \, (1+|y|^{2s-\sigma})}{\,|z|^{N+2s} \, |y-z|^{N+2s}\,} \, \chi_{B^c_{R_t}(z)}(y).
\end{align*}
\end{itemize}
In summary, $\varphi(y,z)$ satisfies for all $y,z\in\R^N$
\begin{align*}
\varphi(y,z) &\le \frac{\, C \, (1+|y|^{2s-\sigma}) \,}{\,|z|^{N+2s-2} \, (1+|y|^{N+2s+2})\,} \, \chi_{B_1}(z) + \frac{C \, (1+|y|^{2s-\sigma})}{\,|z|^{N+2s} \, (1+|y|^{N+2s})} \, \chi_{B_1^c}(z) \\
&+ \frac{\, C \, (1+|y|^{2s-\sigma}) \,}{|z|^{N+2s}} \, \chi_{B_1^c}(z) \, \chi_{B_{R_t}(-z)}(y) + \frac{C \, (1+|y|^{2s-\sigma})}{\, |z|^{N+2s} \, |y+z|^{N+2s} \,} \, \chi_{B^c_1}(z) \, \chi_{B_{R_t}^c(-z)}(y) \\
&+ \frac{\, C \, (1+|y|^{2s-\sigma}) \,}{|z|^{N+2s}} \, \chi_{B_1^c}(z) \, \chi_{B_{R_t}(z)}(y) + \frac{C \, (1+|y|^{2s-\sigma})}{\, |z|^{N+2s} \, |y-z|^{N+2s} \,} \, \chi_{B^c_1}(z) \, \chi_{B_{R_t}^c(z)}(y).
\end{align*}
All terms in the right-hand side above are summable in $\R^{2N}$. The least immediate part regards the terms involving $|y\pm z|^{N+2s}$, but using the change of variable $w=y\pm z$ and the inequalities~\eqref{elementary} we easily get
\begin{align*}
\int_{B_1^c}\frac1{\, |z|^{N+2s} \,}\int_{B^c_{R_t}(\mp z)}\frac{\, 1+|y|^{2s-\sigma} \,}{\, |y\pm z|^{N+2s} \,}\,dy\,dz &= \int_{B_1^c}\frac1{\, |z|^{N+2s} \,}\int_{B^c_{R_t}}\frac{\, 1+|w\mp z|^{2s-\sigma} \,}{|w|^{N+2s}}\,dw\,dz \\
&\le \int_{B_1^c} C \, \frac{\, 1+|z|^{2s-\sigma} \,}{|z|^{N+2s}}\,dz < +\infty.
\end{align*}
Thus, we may conclude that $\varphi(y,z)$ is summable in $\R^{2N}$ and change the order of integration in~\eqref{iterated}, getting
\begin{align*}
\fl u(x,t) &= \frac{\, C_{N,s} \,}{2}\int_{\R^N}u_0(x-y) \, \Big(\int_{\R^N}\frac{\, 2p(y,t)-p(y+z)-p(y-z) \,}{|z|^{N+2s}}\,dz\Big)\,dy \\
&= \int_{\R^N}u_0(x-y)\,\fl p(y,t)\,dy,
\end{align*}
where in the end we have used the representation~\eqref{fl-dd} for the fractional Laplacian of $p(\cdot,t)$ (which is legitimate by Theorem~\ref{continuity} and the regularity and bounds of $p(\cdot,t)$). Comparing with~\eqref{sol1}, we see that $u$ solves~\eqref{fhe}, as claimed.
\vskip2pt
\noindent
To complete the proof of~\ref{solves}, we still need to check that $u(x,t)$ is
continuous at $t = 0$. Let $(x_n)$, $(t_n)$ be sequences in~$\R^N$, $\R^+_0$ respectively, s.t.\ $x_n\to x_0$ and $t_n\to 0$. We claim that
\beq\label{sol4}
\lim_n u(x_n,t_n)=u_0(x_0).
\eeq
Fix $\eps>0$. There exists $r>0$ s.t.\ $|u_0(y)-u_0(x_0)|<\eps$ for all $y\in
B_r(x_0)$. For $n\in\N$ large enough we have $x_n\in B_{r/2}(x_0)$ and
$t_n^\frac{1}{2s}<\frac{r}{2}$. Furthermore, for all $n\in\N$ we have by~\eqref{volume}
\[\int_{\R^N}p(x_n - y, \, t_n)\,dy=1,\]
so by the definition~\eqref{cs} of $u(x,t)$ and the estimate~\eqref{ke0} of the
kernel~$p$ we can compute
\begin{align*}
|u(x_n,t_n)-u_0(x_0)| &= \Big|\int_{\R^N}(u_0(y)-u_0(x_0)) \, p(x_n-y, \, t_n)\,dy\Big| \\
&\le \int_{B_r(x_0)} |u_0(y)-u_0(x_0)| \, p(x_n-y, \, t_n)\,dy+\int_{B_r^c(x_0)} |u_0(y)-u_0(x_0)| \, p(x_n-y, \, t_n)\,dy \\
&\le \eps+ C_2 \, t_n \! \int_{B^c_r(x_0)}
\frac{\, |u_0(y)-u_0(x_0)| \,}{\, |x_n-y|^{N+2s} \,}\,dy,
\end{align*}
where $C_2$ is the constant in~\eqref{ke0}. We focus on the latter integral. For all $y\in B^c_r(x_0)$ and $n\in\N$ large enough we have $|x_n-y|\ge\frac{r}{2}$. This and assumption~\eqref{gc0} imply
\[\frac{\, |u_0(y)-u_0(x_0)| \,}{|x_n-y|^{N+2s}}\le C \, \frac{\,1+|y|^{2s-\sigma} \,}{\, (|y-x_0|-(r/2))^{N+2s} \,},\]
and the latter is summable in $B^c_r(x_0)$. Recalling that $t_n \to 0$, for $n\in\N$ large enough we may write
\[|u(x_n,t_n)-u_0(x_0)|<2\eps,\]
which proves~\eqref{sol4}. Thus, \ref{solves} is achieved.
\vskip2pt
\noindent
Finally, we prove~\ref{uniqueness}. The difference $w = u_1 - u_2$ is a solution of problem~\eqref{ivp} with initial value $w(x,0) = 0$ in~$\R^N$. Since $w$ satisfies the estimate~\eqref{gc} with $A(t) = A_1(t) + A_2(t)$ and $B = B_1 +B_2$, by Theorem~\ref{mp} we have
\[\sup_{(x,t)\in\R^N\times\R^+}(u_1(x,t) - u_2(x,t))=0.\]
By reverting the roles of $u_1$ and~$u_2$ we conclude that $u_1 = u_2$ in $\R^N \times \R^+$ and conclude the proof.
\end{proof}

\section{Convex solutions}\label{sec4}

\noindent
This section is devoted to the relation between convexity and the fractional Laplacian. We begin with some observations on the stationary case, and then we study how convexity propagates from the initial datum through the fractional heat flow. We will see how the study of convex functions leads naturally to considering the operator $\fl$ with $s\in(1/2,1)$.
\vskip2pt
\noindent
Obviously, if $u$ is a constant function, then $\fl u(x)=0$ for all $s\in(0,1)$ and all $x\in\R^N$. As soon as we consider non-constant, affine functions, a dichotomy appears:

\begin{lemma}\label{fl-affine}
Let $u\in C(\R^N)$ be a non-constant, affine function and $s\in(0,1)$:
\begin{enumroman}
\item\label{fl-affine1} if $s\le 1/2$, then $\fl u(x)$ is indefinite at any $x\in\R^N$;
\item\label{fl-affine2} if $s>1/2$, then $\fl u(x)=0$ at any $x\in\R^N$.
\end{enumroman}
\end{lemma}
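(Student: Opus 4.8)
The plan is to reduce, by the substitution $y = x+z$, to a single one–variable integral and then exploit the oddness of the resulting integrand together with a polar–coordinates computation of its positive and negative parts. Write $u(x) = a + b\cdot x$ with $a\in\R$ and $b\in\R^N\setminus\{0\}$ (the latter because $u$ is non‑constant). Fixing $x\in\R^N$ and setting $y = x+z$ gives $u(x)-u(y) = -b\cdot z$ and $y\in B^c_\eps(x) \iff z\in B^c_\eps$, so
\[
\int_{B^c_\eps(x)}\frac{u(x)-u(y)}{|x-y|^{N+2s}}\,dy = \int_{B^c_\eps}\frac{-b\cdot z}{|z|^{N+2s}}\,dz .
\]
The right‑hand side does not depend on $x$; hence it suffices to study the function $g(z) := (-b\cdot z)\,|z|^{-N-2s}$ on $B^c_\eps$, and I note that $g$ is odd while $B^c_\eps$ is symmetric about the origin. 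After a rotation I may also assume $b = |b|\,e_1$, so that $g(z) = -|b|\,z_1\,|z|^{-N-2s}$.

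For claim \ref{fl-affine2} ($s>1/2$): I would first check that $g$ is summable on every $B^c_\eps$. Indeed $|g(z)|\le |b|\,|z|^{1-N-2s}$, and passing to polar coordinates $\int_{B^c_\eps}|z|^{1-N-2s}\,dz = N\omega_N\int_\eps^{+\infty} r^{-2s}\,dr < +\infty$ precisely because $2s>1$ (there is no issue near the boundary of $B^c_\eps$, where $|z|\ge\eps$). Since $g$ is odd and the domain is symmetric about $0$, it follows that $\int_{B^c_\eps}g = 0$ for every $\eps>0$; therefore the limit as $\eps\to 0^+$ exists and equals $0$, and by Definition \ref{fldef}, $\fl u(x) = C_{N,s}\cdot 0 = 0$.

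For claim \ref{fl-affine1} ($s\le 1/2$): I would show that $g$ fails to be integrable on $B^c_\eps$ for every $\eps>0$, i.e.\ that both $\int_{B^c_\eps}g^+$ and $\int_{B^c_\eps}g^-$ equal $+\infty$; by Definition \ref{fldef} this is exactly the statement that $\fl u(x)$ is indefinite at $x$. With $b = |b|\,e_1$ one has $g^+(z) = |b|\,(z_1)^-\,|z|^{-N-2s}$, and in polar coordinates $z = r\omega$ the integral factors as
\[
\int_{B^c_\eps}g^+ = |b|\,\Big(\int_{S^{N-1}}(\omega_1)^-\,d\omega\Big)\int_\eps^{+\infty} r^{-2s}\,dr = +\infty ,
\]
since the angular factor is a strictly positive constant (it is the integral of $(\omega_1)^-$ over a hemisphere) and $\int_\eps^{+\infty}r^{-2s}\,dr = +\infty$ exactly when $2s\le 1$. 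Using the symmetry $z\mapsto -z$ one gets $\int_{B^c_\eps}g^- = \int_{B^c_\eps}g^+ = +\infty$ as well, which establishes indefiniteness and completes the argument.

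The computation itself is elementary; the only points that require care are the bookkeeping with the paper's notions of \emph{integrable}, \emph{summable} and \emph{indefinite} from Definition \ref{fldef}, and making sure that the oddness/symmetry cancellation is applied to $B^c_\eps$ (which is symmetric about the origin after the substitution) rather than to $B^c_\eps(x)$. No genuine obstacle is expected.
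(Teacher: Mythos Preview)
Your proposal is correct and follows essentially the same approach as the paper: after the substitution $y=x+z$ you reduce to the odd integrand $-b\cdot z/|z|^{N+2s}$ on the symmetric domain $B^c_\eps$, and then a polar-coordinates factorization shows that the positive and negative parts both diverge when $2s\le 1$ and are both finite (hence cancel) when $2s>1$. The paper does the same thing without the preliminary rotation, working directly with the half-spaces $\{\,\pm\xi\cdot(y-x)\le 0\,\}$, but the argument is otherwise identical.
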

\begin{proof}
To prove~\ref{fl-affine1}, we fix $x\in\R^N$, $\varepsilon > 0$ and show that the function in~(\ref{integrand}) is not integrable in~$B^c_\varepsilon(x)$. By assumption there exists $\xi\in\R^N\setminus\{0\}$ s.t.\ for all $y\in\R^N$
\[u(y)=u(x)+\xi\cdot (y - x).\]
Define
\begin{equation}\label{Bpm}
(B^c_\varepsilon(x))^\pm:=\big\{y\in B_\varepsilon^c(x) : \, \pm \xi \cdot (y - x) \le 0\big\},
\end{equation}
so that $B^c_\varepsilon(x)= (B^c_\varepsilon(x))^+ \cup (B^c_\varepsilon(x))^-$. We have
$$
\int_{(B^c_\varepsilon(x))^\pm}\frac{u(x)-u(y)}{\, |x-y|^{N+2s} \,}\,dy
=
\int_{(B^c_\varepsilon(x))^\pm}\frac{\xi \cdot (x - y)}{\, |x-y|^{N+2s} \,}\,dy
.
$$
Passing to polar coordinates by $y - x = \rho \, w$, where $\rho \ge 0$ and $|w| = 1$, the volume element~$dy$ becomes $\rho^{N - 1} \, d\rho \, d{\mathcal H}^{N - 1}$ and the set $(B^c_\varepsilon(x))^\pm$ is transformed into the Cartesian product $[\varepsilon,+\infty) \times \partial B_1^\pm$. Here $\partial B_1^\pm = \{\, w \in \partial B_1 : \pm \xi \cdot w \le 0 \,\}$ and $d{\mathcal H}^{N - 1}$ denotes the $(N - 1)$-dimensional Hausdorff measure on the unit spherical surface $\mathbb S^{N - 1} = \partial B_1$. Hence
\begin{align*}
\pm\int_{(B^c_\varepsilon(x))^\pm}\frac{\xi \cdot (x - y)}{\, |x-y|^{N+2s} \,}\,dy
&=
-
\int_\varepsilon^{+\infty} \frac{d\rho}{\, \rho^{2s} \,}
\int_{\partial B_1^\pm} (\pm\xi \cdot w) \, d{\mathcal H}^{N - 1}
\end{align*}
Since $2s\le 1$, and by the definition of~$\partial B_1^\pm$, we have
$$
\int_\varepsilon^{+\infty} \frac{d\rho}{\, \rho^{2s} \,} = +\infty
\qquad\mbox{and}\qquad
\int_{\partial B_1^\pm} (\pm\xi \cdot w) \, d{\mathcal H}^{N - 1} \in \mathbb R_0^-
,
$$
therefore
\begin{equation}\label{affineball}
\pm\int_{(B^c_\varepsilon(x))^\pm}\frac{\xi \cdot (x - y)}{\, |x-y|^{N+2s} \,}\,dy = +\infty,
\end{equation}
so the function in~(\ref{integrand}) is not integrable in~$B^c_\varepsilon(x)$, as claimed. By Definition~\ref{fldef}, $\fl u$ is indefinite at $x$.
\vskip2pt
\noindent
We prove~\ref{fl-affine2}. We assume $s>1/2$, fix $x\in\R^N$ and $\eps>0$, and note that for all $y\neq x$
\[\Big|\frac{\, u(x)-u(y) \,}{\, |x-y|^{N+2s} \,}\Big|\le\frac{|\xi|}{\, |x-y|^{N+2s-1} \,},\]
and the latter is summable in $B^c_\eps(x)$ as $N+2s-1>N$. Furthermore, by symmetry we have
\[\int_{B^c_\eps(x)}\frac{\, u(x)-u(y) \,}{\, |x-y|^{N+2s} \,}\,dy=0,\]
so clearly $\fl u(x)=0$.
\end{proof}

\noindent
If we consider convex functions, the dichotomy still takes place. Moreover, in the case $s>1/2$, the fractional Laplacian admits the non-singular representation~\eqref{fl-dd}, in which the principal value can be omitted: this is mainly due to the fact that, for a convex function $u$, we have for all $x,z\in\R^N$
\[2u(x)-u(x+z)-u(x-z)\le 0.\]

\begin{lemma}\label{fl-convex}
Let $u \colon \R^N\to\R$ be a non-affine, convex function and $s\in(0,1)$:
\begin{enumroman}
\item\label{fl-convex1} if $s\le 1/2$, then for any $x\in\R^N$ the fractional Laplacian $\fl u(x)$ is either indefinite or $-\infty$;
\item\label{fl-convex2} if $s>1/2$, then $\fl u(x)\in\R^-_0\cup\{-\infty\}$ at each $x\in\R^N$, and\/~\eqref{fl-dd} holds.
\end{enumroman}
\end{lemma}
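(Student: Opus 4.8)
The plan is to reduce the whole statement to an analysis of the \emph{symmetrised} integral $\int_{\R^N}\frac{2u(x)-u(x+z)-u(x-z)}{|z|^{N+2s}}\,dz$, exploiting that for a convex $u$ its integrand has a sign: by midpoint convexity $g_x(z):=u(x+z)+u(x-z)-2u(x)\ge 0$ for every $z$, so this integral is unambiguously defined as an element of $[-\infty,0]$. The preliminary observation I would record is that, since $u$ is not affine, for each fixed $x$ the function $g_x$ does not vanish identically: if $g_x\equiv 0$ then $z\mapsto u(x+z)-u(x)$ is convex and odd, hence $\R$-linear, so $u$ would be affine. Thus I may fix, for the given $x$, a point $z_0$ with $g_x(z_0)=c_0>0$, and by continuity of $g_x$ the integrand $-g_x(z)/|z|^{N+2s}$ is strictly negative on a ball around $z_0$.

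Next I would establish, for merely continuous (hence for convex) $u$, a version of the symmetrisation identity of Theorem~\ref{continuity} that does not rely on $C^2$ regularity. Denote by~$(\star)$ the property that $\int_{B^c_\eps(x)}\frac{(u(x)-u(y))^+}{|x-y|^{N+2s}}\,dy<\infty$ for every $\eps>0$. I claim that, unless $\fl u(x)$ is indefinite, $(\star)$ holds: if $(\star)$ failed with the complementary integral $\int_{B^c_\eps(x)}\frac{(u(y)-u(x))^+}{|x-y|^{N+2s}}\,dy$ still finite, then $\int_{B^c_\eps(x)}\frac{u(x)-u(y)}{|x-y|^{N+2s}}\,dy=+\infty$, while splitting the domain under $z\mapsto-z$ gives $\int_{B^c_\eps(x)}\frac{u(x)-u(y)}{|x-y|^{N+2s}}\,dy=\frac12\int_{B^c_\eps}\frac{2u(x)-u(x+z)-u(x-z)}{|z|^{N+2s}}\,dz\le 0$, a contradiction; and if both integrals are infinite then the function in~\eqref{integrand} is not integrable, i.e.\ $\fl u(x)$ is indefinite. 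When $s>1/2$, $(\star)$ holds \emph{unconditionally}: picking $p_x\in\partial u(x)$ (a convex function finite on $\R^N$ is subdifferentiable everywhere) we get $(u(x)-u(y))^+\le|p_x|\,|x-y|$, so $\int_{B^c_\eps(x)}\frac{(u(x)-u(y))^+}{|x-y|^{N+2s}}\,dy\le|p_x|\int_{B^c_\eps(x)}\frac{dy}{|x-y|^{N+2s-1}}<\infty$ because $N+2s-1>N$; in particular $\fl u(x)$ is never indefinite in this range. Granting $(\star)$, additivity of the integral (the relevant sums are never of the form $\infty-\infty$) yields for every $\eps>0$
\[\int_{B^c_\eps(x)}\frac{u(x)-u(y)}{|x-y|^{N+2s}}\,dy=\frac12\int_{B^c_\eps}\frac{2u(x)-u(x+z)-u(x-z)}{|z|^{N+2s}}\,dz,\]
and, letting $\eps\to 0^+$, monotone convergence (applied to $g_x(z)/|z|^{N+2s}\ge 0$) shows that the principal value of Definition~\ref{fldef} exists in $[-\infty,0]$ and equals $-\frac12\int_{\R^N}\frac{g_x(z)}{|z|^{N+2s}}\,dz$. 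Hence $\fl u(x)=-\frac{C_{N,s}}2\int_{\R^N}\frac{g_x(z)}{|z|^{N+2s}}\,dz$, which is~\eqref{fl-dd}; and since $g_x\ge 0$ is continuous and positive near $z_0$, the last integral is strictly positive, so $\fl u(x)\in\R^-_0\cup\{-\infty\}$. This proves~\ref{fl-convex2}, and reduces~\ref{fl-convex1} to showing that $\int_{\R^N}\frac{g_x(z)}{|z|^{N+2s}}\,dz=+\infty$ when $s\le 1/2$.

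For that last point I would argue geometrically. Choose coordinates with $z_0=|z_0|\,e_1$, write $z=(z_1,z')\in\R\times\R^{N-1}$, and restrict the integral to the cone $C:=\{z_1\ge|z_0|,\ |z'|\le z_1\}$, on which $|z|\le\sqrt2\,z_1$. For each fixed $z_1\ge|z_0|$, Jensen's inequality applied to the convex function $g_x$ on the $(N-1)$-ball $B:=\{z_1\}\times\{|z'|\le z_1\}$, whose centroid is $z_1e_1$, gives $\int_B g_x\,dz'\ge|B|\,g_x(z_1e_1)=c_N z_1^{N-1}\,g_x(z_1e_1)$ for a dimensional constant $c_N>0$; and convexity of $t\mapsto g_x(te_1)$, together with $g_x(0)=0$ and $g_x(|z_0|e_1)\ge c_0$, forces $g_x(z_1e_1)\ge\frac{c_0}{|z_0|}\,z_1$ for $z_1\ge|z_0|$. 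Combining, and using $g_x\ge 0$,
\[\int_{\R^N}\frac{g_x(z)}{|z|^{N+2s}}\,dz\ \ge\ \int_C\frac{g_x(z)}{|z|^{N+2s}}\,dz\ \ge\ \frac{c_N c_0}{(\sqrt2)^{N+2s}\,|z_0|}\int_{|z_0|}^{+\infty}\frac{z_1^{N}}{z_1^{N+2s}}\,dz_1\ =\ \text{const}\int_{|z_0|}^{+\infty}z_1^{-2s}\,dz_1,\]
and the last integral diverges precisely because $2s\le 1$. This finishes~\ref{fl-convex1}.

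The crux is the estimate in the third paragraph: from the single inequality $g_x(z_0)>0$ one must extract growth of $g_x$ on a set large enough (a genuine cone, not a ray or a thin cylinder) that the integral diverges in the borderline range $2s\le 1$, and it is the Jensen-on-the-ball bound that converts the one-dimensional linear lower bound along $\R e_1$ into the required $(N-1)$-dimensional mass. A secondary point requiring care is that the non-singular representation~\eqref{fl-dd} must be re-derived here without the $C^2$ hypothesis of Theorem~\ref{continuity}, since a convex function need not be $C^2$ (indeed the ``local part'' of the symmetrised integral may itself be $-\infty$ when $s>1/2$ and $u$ has a corner at $x$); the sign of the integrand is exactly what lets one bypass this.
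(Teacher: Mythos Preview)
Your proof is correct. Part~\ref{fl-convex2} is essentially the paper's argument in different clothing: the paper subtracts the supporting affine function at~$x$ to obtain a non-negative remainder~$v$ with $v(x)=0$, observes that the affine part integrates to zero over $B^c_\eps(x)$ by oddness (this is where $N+2s-1>N$ enters), and then lets $\eps\to 0^+$ by monotonicity. Your condition~$(\star)$, established via the subgradient bound $(u(x)-u(y))^+\le |p_x|\,|x-y|$, is exactly the same mechanism.

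For part~\ref{fl-convex1} the two arguments genuinely diverge. The paper does not analyse~$g_x$ at all: since $u$ is convex and non-constant it has a \emph{global} supporting hyperplane $u(y)\ge a+\xi\cdot y$ with $\xi\neq 0$, so $u(x)-u(y)\le C+\xi\cdot(x-y)$; integrating this over the half of $B^c_\eps(x)$ where $\xi\cdot(x-y)\le 0$ and invoking the computation~\eqref{affineball} from Lemma~\ref{fl-affine} immediately gives $\int_{(B^c_\eps(x))^-}\frac{u(x)-u(y)}{|x-y|^{N+2s}}\,dy=-\infty$, hence the full integral is either indefinite or~$-\infty$. This is shorter because the divergence work was already done in the affine lemma. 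Your route is self-contained: you exploit that $g_x$ itself is convex with $g_x(0)=0$ and $g_x(z_0)>0$, extract a linear lower bound along the ray $\R^+ e_1$, and then use Jensen's inequality over the cross-sectional $(N-1)$-balls to upgrade this to a lower bound with enough mass on a full cone, forcing $\int z_1^{-2s}\,dz_1$ to diverge when $2s\le 1$. The paper's approach is more economical within the paper's structure; yours has the merit of not depending on Lemma~\ref{fl-affine} and of making explicit that the divergence for $s\le 1/2$ is driven by the at-least-linear growth of~$g_x$ at infinity.
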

\begin{proof}
First we prove~\ref{fl-convex1}. Let $s\le 1/2$ and fix $x \in \R^N$, $\varepsilon > 0$. Since $u$ is convex and non-constant, there exist $a \in \R$ and $\xi\in\R^N \setminus \{0\}$ s.t.\ the inequality $u(y) \ge a + \xi \cdot y$ holds for all $y\in\R^N$, and therefore
\[u(x) - u(y)\le u(x) - a - \xi \cdot y =C + \xi \cdot (x - y),\]
where $C = u(x) - a - \xi \cdot x$. Integrating over the set $(B^c_\varepsilon(x))^-$ defined as in~\eqref{Bpm}, and taking~\eqref{affineball} into account, we get
\[\int_{(B^c_\varepsilon(x))^-}\frac{\, u(x) - u(y) \,}{\, |x - y|^{N + 2s} \,} \, dy \le \int_{(B^c_\varepsilon(x))^-}\frac{C}{\, |x - y|^{N + 2s} \,} \, dy+\int_{(B^c_\varepsilon(x))^-}\frac{\, \xi \cdot (x - y) \,}{\, |x - y|^{N + 2s} \,} \, dy = -\infty,\]
because the integral of $C/|x - y|^{N + 2s}$ over $(B^c_\varepsilon(x))^-$ is finite. Consequently, the integral
\[\int_{B^c_\varepsilon(x)}\frac{\, u(x) - u(y) \,}{\, |x - y|^{N + 2s} \,} \, dy\]
is either indefinite or~$-\infty$, and the same conclusion holds for $\fl u(x)$, as claimed.
\vskip2pt
\noindent
We prove~\ref{fl-convex2}. Assume $s>1/2$ and fix $x\in\R^N$. By convexity, there exists $\xi\in\R^N$ (possibly 0) s.t.\ for all $y\in\R^N$
\[u(y)\ge u(x)+\xi\cdot(y-x).\]
Set for all $y\in\R^N$
\[v(y):=u(y)-(u(x)+\xi\cdot(y-x)),\]
so that $v\in C(\R^N)$ is convex, non-negative in~$\R^N$, not identically zero, and $v(x)=0$. Thus we can define
\[I:=\int_{\R^N}\frac{v(y)}{\, |x-y|^{N+2s} \,}\,dy, \quad I_\eps:=\int_{B^c_\eps(x)}\frac{v(y)}{\, |x-y|^{N+2s} \,}\,dy \quad (\eps>0).\]
Clearly $I,I_\eps\in\R^+_0\cup\{+\infty\}$, the mapping $\eps\mapsto I_\eps$ is non-increasing, and $I_\eps\to I$ as $\eps\to 0^+$. For all $\eps>0$ we have, by $N+2s>N+1$ and symmetry,
\[\int_{B^c_\eps(x)}\frac{u(x)-u(y)}{\, |x-y|^{N+2s} \,}\,dy=-I_\eps+\int_{B^c_\eps(x)}\frac{\xi\cdot(x-y)}{\, |x-y|^{N+2s} \,}\,dy=-I_\eps.\]
Thus, by Definition~\ref{fldef} we have
\beq\label{flv}
\fl u(x)=-C_{N,s}\,I\in\R^-_0\cup\{-\infty\}.
\eeq
To conclude, we perform the change of variable $y = x \pm z$ in the
integral~$I$, thus getting
\[\int_{\R^N} \frac{\, v(x \pm z) \,}{|z|^{N + 2s}} \, dz=I.\]
By summation of the two equalities we obtain
\beq\label{i1}
\frac1{\, 2 \,}
\int_{\R^N}\frac{\, v(x + z) + v(x - z) \,}{|z|^{N+2s}}\,dz=I.
\eeq
Combining~\eqref{flv}, \eqref{i1}, and the definition of $v$, we have
\[\fl u(x)=-\frac{\, C_{N,s} \,}2 \int_{\R^N}\frac{\, v(x + z) + v(x - z) \,}{|z|^{N+2s}}\,dz=\frac{\, C_{N,s} \,}{2}\int_{\R^N}\frac{\, 2u(x) - u(x + z) - u(x - z) \,}{|z|^{N+2s}}\,dz,\]
which proves~\eqref{fl-dd} and concludes the argument.
\end{proof}

\begin{remark}
Both cases in~\ref{fl-convex1} of Lemma~\ref{fl-convex} may occur. For instance, fix $N=1$, $s\in(0,1/2]$, and for all $x\in\R$ set
\[u(x)=\begin{cases}
x & \text{if $x\ge 0$} \\
\lambda x & \text{if $x<0$,}
\end{cases}\]
where $\lambda\in(-\infty,1)$. An easy computation shows that $\fl u(0)=-\infty$ if $\lambda\le 0$, while it is indefinite if $\lambda\in(0,1)$, thus complying with either the first or the second case of~\ref{fl-convex1}.
\end{remark}

\noindent
The analogue of Theorem~\ref{continuity} for convex functions only requires a one-sided growth condition:

\begin{proposition}\label{fl-convgro}
Let $s\in(1/2,1)$, $u\in C^2(\R^N)$ be a convex function satisfying
\beq\label{gce}
u(x)\le A+B \, |x|^{2s-\sigma} \ \text{for all $x\in\R^N$ ($A,B,\sigma>0$).}
\eeq
Then $\fl u(x)$ converges for all $x\in\R^N$ and $\fl u\in C(\R^N)$.
\end{proposition}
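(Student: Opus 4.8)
The plan is to deduce the statement directly from Theorem~\ref{continuity}, after upgrading the one-sided growth bound~\eqref{gce} to a genuine two-sided bound of the form~\eqref{t1} by using convexity to control $u$ from below.

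First I would produce an affine lower bound for $u$: since $u\in C^2(\R^N)$ is convex, its tangent hyperplane at the origin lies below the graph, so $u(y)\ge u(0)+Du(0)\cdot y\ge -|u(0)|-|Du(0)|\,|y|$ for all $y\in\R^N$. Next I would merge this with~\eqref{gce}. The point where $s>1/2$ enters is that \emph{both} exponents $2s-\sigma$ and $1$ are strictly less than $2s$: setting $\sigma':=\min\{\sigma,\,2s-1\}>0$ one has $|y|^{2s-\sigma}\le 1+|y|^{2s-\sigma'}$ and $|y|\le 1+|y|^{2s-\sigma'}$, whence the upper bound~\eqref{gce} together with the affine lower bound combine into
\[|u(x)|\le \tilde A+\tilde B\,|x|^{2s-\sigma'}\qquad\text{for all }x\in\R^N,\]
for suitable constants $\tilde A,\tilde B,\sigma'>0$, which is precisely hypothesis~\eqref{t1}. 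Theorem~\ref{continuity} then applies and yields at once that $\fl u(x)$ converges for every $x\in\R^N$ (with the nonsingular representation~\eqref{fl-dd}, which is anyway already known from Lemma~\ref{fl-convex}\,\ref{fl-convex2}) and that $\fl u\in C(\R^N)$, which is the assertion.

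There is no genuine obstacle in this argument; the only delicate point is the inequality $\max\{1,\,2s-\sigma\}<2s$, which is exactly where the restriction $s\in(1/2,1)$ is needed, and which fails when $s\le 1/2$ — consistently with the indefiniteness recorded in Lemma~\ref{fl-affine}\,\ref{fl-affine1}.

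If one prefers to avoid invoking Theorem~\ref{continuity}, the same conclusion can be reached directly from Lemma~\ref{fl-convex}\,\ref{fl-convex2}, where $\fl u(x)=-C_{N,s}\,I$ with $I=\tfrac12\int_{\R^N}\frac{2u(x)-u(x+z)-u(x-z)}{|z|^{N+2s}}\,dz\in[0,+\infty]$. One splits this integral at $|z|=1$: near the origin the Taylor bound $|2u(x)-u(x+z)-u(x-z)|\le|z|^2\sup_{B_1(x)}|D^2u|$ makes the integrand summable against $|z|^{-N-2s}$ since $N+2s-2<N$, while on $B_1^c$ the estimate $0\le u(x+z)+u(x-z)-2u(x)\le C_x\,(1+|z|^{2s-\sigma})$, obtained from~\eqref{gce} and the elementary inequalities~\eqref{elementary}, is integrable against $|z|^{-N-2s}$ because $N+\sigma>N$; hence $I<+\infty$ and $\fl u(x)$ converges. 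Continuity is then obtained verbatim as in the proof of Theorem~\ref{continuity}\,\ref{continuity2}, applying dominated convergence on the $B_1^c$ part with the very same majorant — now uniform for $x$ in a neighbourhood of the base point $x_0$, thanks to the boundedness of $u$ on a compact set containing that neighbourhood — and letting the radius of the small ball tend to $0$.
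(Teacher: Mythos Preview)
Your main argument is correct and coincides with the paper's proof: both observe that convexity yields an affine lower bound, adjust the exponent so that $2s-\sigma'>1$ (the paper phrases this as ``it is not restrictive to assume $\sigma\in(0,2s-1)$''), obtain the two-sided bound~\eqref{t1}, and then invoke Theorem~\ref{continuity}. Your alternative direct argument via Lemma~\ref{fl-convex}\,\ref{fl-convex2} is also sound but is not in the paper.
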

\begin{proof}
It is not restrictive to assume $\sigma\in(0,2s-1)$. We note that, since $u$ is
convex and $2s-\sigma>1$, an estimate analogous to~\eqref{gce} holds in fact
also from below: i.e., taking $A,B>0$ bigger if necessary, for all $x\in\R^N$ we
have~\eqref{t1}. Hence the conclusion follows from Theorem~\ref{continuity}.
\end{proof}

\begin{remark}
From Lemma~\ref{fl-convex} and Proposition~\ref{fl-convgro} we see that when dealing with a convex function~$u$, it is quite natural to restrict our attention to the case $s\in(1/2,1)$ as we pointed out in the Introduction: first, because otherwise $\fl u$ does not converge; and second, because condition~\eqref{gce} is consistent with convexity only if $2s>1$.
\end{remark}

\noindent
The next result is an alternative to Proposition~\ref{vanish1} to prove that the fractional Laplacian $\fl u$ vanishes at infinity. Here, a milder decay condition than~\eqref{d2} is used together with the convexity of~$u$ and a further condition of geometric type (which excludes affine functions). We include such result for completeness, as it will not be used afterwards:

\begin{proposition}\label{vanish2}
Let $s\in(1/2,1)$ and let $u\in C^2(\R^N)$ be a convex function satisfying~\eqref{gce} together with
\beq\label{d2-alt}
\lim_{|x|\to +\infty}|D^2u(x)|=0,
\eeq
\beq\label{geo}
2 \, Du(x)\neq D u(x+z)+D u(x-z) \ \text{for all $x,z\in B^c_R$ ($R>0$).}
\eeq
Then $\fl u(x)$ converges at every $x \in \R^N$, and~\eqref{vanish} holds.
\end{proposition}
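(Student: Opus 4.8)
The plan is to get convergence of $\fl u$ for free from Proposition~\ref{fl-convgro} (applicable since $u$ is convex and satisfies \eqref{gce} with $s>1/2$), and to prove \eqref{vanish} by estimating the non-singular representation. Since \eqref{geo} forces $u$ to be non-affine, Lemma~\ref{fl-convex}\ref{fl-convex2} gives, for every $x\in\R^N$,
\[
\fl u(x)=\frac{C_{N,s}}{2}\int_{\R^N}\frac{2u(x)-u(x+z)-u(x-z)}{|z|^{N+2s}}\,dz=-\frac{C_{N,s}}{2}\int_{\R^N}\frac{g_x(z)}{|z|^{N+2s}}\,dz,
\]
where $g_x(z):=u(x+z)+u(x-z)-2u(x)\ge0$ by convexity; in particular $\fl u<0$ everywhere, and it remains to show $I(x):=\int_{\R^N}g_x(z)\,|z|^{-N-2s}\,dz\to0$ as $|x|\to+\infty$. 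I record two consequences of convexity and \eqref{gce}: as in the proof of Proposition~\ref{fl-convgro} the two-sided bound \eqref{t1} holds, and comparing $u(y+t\,Du(y)/|Du(y)|)$ with \eqref{t1} yields $|Du(y)|\le C(1+|y|^{2s-\sigma-1})$ for all $y$, whence $g_x(z)\le C(1+|x|^{2s-\sigma}+|z|^{2s-\sigma})$ and, using convexity directly, $g_x(z)\le[Du(x+z)-Du(x-z)]\cdot z$; moreover the exact Taylor identity $g_x(z)=\int_{-1}^{1}(1-|\tau|)\,z^{\top}D^2u(x+\tau z)\,z\,d\tau$ holds, which gives $g_x(z)\le|z|^{2}\sup_{|\tau|\le1}|D^2u(x+\tau z)|$ and, because $s>1/2$, after Fubini and the scaling $w=\tau z$, the identity $I(x)=\tfrac{1}{s(2s-1)}\int_{\R^N}w^{\top}D^2u(x+w)\,w\,|w|^{-N-2s}\,dw$.

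I would split $I(x)$ over $\{|z|<r\}$, $\{|z|>\max(M,|x|)\}$ and the intermediate annulus, with $r$ fixed and $M$ large. On $\{|z|<r\}$ the Taylor bound gives $\int_{|z|<r}g_x(z)|z|^{-N-2s}dz\le\frac{N\omega_N}{2-2s}\,r^{2-2s}\sup_{B_r(x)}|D^2u|$, which tends to $0$ by \eqref{d2-alt} (for $r$ fixed). On $\{|z|>\max(M,|x|)\}$ one has $|x|\le|z|$, so $g_x(z)\le C(1+|z|^{2s-\sigma})$ and the integral is at most $C\int_{|z|>M}(1+|z|^{2s-\sigma})|z|^{-N-2s}dz$, as small as desired for $M$ large, uniformly in $x$ (here $\sigma<2s$ enters). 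The shell $\{|x|/2<|z|\le|x|\}$ is handled the same way: there $g_x(z)\le C|x|^{2s-\sigma}$ and the integral is $\le C|x|^{-\sigma}\to0$. Thus everything reduces to the inner shell $\Sigma_x:=\{r\le|z|\le|x|/2\}$.

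On $\Sigma_x$ every $x+\tau z$ with $|\tau|\le1$ has $|x+\tau z|\ge|x|/2$, so the Taylor bound gives $g_x(z)\le|z|^{2}\,\omega_u(|x|/2)$ with $\omega_u(\rho):=\sup_{|y|\ge\rho}|D^2u(y)|\to0$, while convexity and the bound on $Du$ give $g_x(z)\le C|z|\,|x|^{2s-\sigma-1}$; integrating either of these (or their minimum) over $\Sigma_x$ produces a quantity of order $\omega_u(|x|/2)\,|x|^{2-2s}$, which a priori need not vanish, because \eqref{d2-alt} supplies no decay rate for $|D^2u|$. This is exactly where the geometric hypothesis \eqref{geo} (which excludes affineness, and more) must be used, and I expect one must argue by contradiction: if $\fl u(x_n)\le-c_0<0$ along $|x_n|\to+\infty$, then the estimates above locate the offending ``$\fl u$-mass'' at scales $\ell_n$ with $r\ll\ell_n\ll|x_n|$ and force $|D^2u|\gtrsim|x_n|^{2s-2}$ at some point near $x_n$, so that by \eqref{d2-alt} $|x_n|^{2s-2}\to0$ while $|D^2u|$ attains a comparable value. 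Blowing $u$ up around such a point at the matching scale (subtracting the affine tangent, and using the growth bound \eqref{t1} for compactness and convexity to pass to the limit) produces a limiting profile $\hat v$ which is convex, nonnegative, vanishes at the origin together with its subdifferential, is $C^{1,1}$, carries $\fl\hat v(0)\le-c_0<0$, and is genuinely non-affine; on the other hand \eqref{geo} passes to the limit — in rescaled variables it reads $D\hat v_n(\xi)+D\hat v_n(-\xi)\ne2D\hat v_n(0)=0$ on $\{|\xi|\ge R/\ell_n\}$ with $R/\ell_n\to0$ — and the combination of these two facts should be contradictory, which closes the argument.

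The main obstacle is precisely the inner shell $\Sigma_x$: since \eqref{d2-alt} gives no quantitative control of $|D^2u|$, the elementary estimate is insufficient, and one must genuinely combine convexity — through the $s>1/2$ identity expressing $\fl u$ via $D^2u$ and the recession bound $|Du(y)|\lesssim|y|^{2s-\sigma-1}$ — with the nondegeneracy \eqref{geo} to rule out accumulation of mass at intermediate scales. Setting up the blow-up rigorously, pinning down the base point and the critical scale $\ell_n$, and extracting a genuine contradiction from the limiting form of \eqref{geo}, is the delicate technical core; the rest is routine.
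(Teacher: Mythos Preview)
Your reduction is sound up to the inner shell $\Sigma_x=\{r\le|z|\le|x|/2\}$: convergence via Proposition~\ref{fl-convgro}, the near part $\{|z|<r\}$ by Taylor and~\eqref{d2-alt}, and the far parts by the growth bound, are all fine. The genuine gap is the treatment of~$\Sigma_x$. Your blow-up sketch does not work as written: condition~\eqref{geo} is a \emph{strict} inequality, so it does not pass to limits, and even if it did, the statement ``$D\hat v(\xi)+D\hat v(-\xi)\ne 0$ for $\xi\ne 0$'' is perfectly compatible with a nonnegative convex $C^{1,1}$ profile $\hat v$ with $\hat v(0)=0$, $D\hat v(0)=0$ and $\fl\hat v(0)<0$ (e.g.\ any non-even such profile). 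You never say what the contradiction actually is, and I do not see one.

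The paper's argument avoids the intermediate shell entirely, via a direct and much simpler idea that you missed: set $\psi(x,z)=2u(x)-u(x+z)-u(x-z)\le 0$ and, for each $r>R$, minimize $\psi$ over $\R^N\times\partial B_r$. The minimum exists because $\psi(x,z)\to 0$ as $|x|\to\infty$ uniformly for $|z|=r$ (Taylor plus~\eqref{d2-alt}). At the minimizer $(x_r,z_r)$ one has $D_x\psi(x_r,z_r)=0$, i.e.\ $2Du(x_r)=Du(x_r+z_r)+Du(x_r-z_r)$; since $|z_r|=r>R$, hypothesis~\eqref{geo} forces $|x_r|<R$. Thus $|x_r\pm z_r|<R+r$, and~\eqref{gce} yields the uniform bound $\psi(x,z)\ge -A_1-B_1|z|^{2s-\sigma}$ on each sphere $|z|=r\ge R$. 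Hence the tail $\int_{|z|\ge r}\psi(x,z)\,|z|^{-N-2s}\,dz$ is small for $r$ large \emph{uniformly in~$x$}, and the remaining $\int_{|z|<r}$ is then small for $|x|$ large by~\eqref{d2-alt}. The point is that~\eqref{geo} is exactly the statement that $\psi$ has no critical point in~$x$ outside~$B_R$ when $|z|\ge R$; this localizes the worst case and replaces your blow-up with a one-line Lagrange-multiplier argument.
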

\begin{proof}
By Proposition~\ref{fl-convgro}, $\fl u(x)$ converges at every $x\in\R$. By Lemma~\ref{fl-convex}, the representation~\eqref{fl-dd} holds for any $x\in\R^N$, and for every $r >R$ we may write
\beq\label{split}
\frac2{C_{N,s}} \, \fl u(x)=\int_{B_r} \frac{\, 2u(x) - u(x + z) - u(x - z) \,}{|z|^{N + 2s}} \, dz+\int_{B^c_r} \frac{\, 2u(x) - u(x + z) - u(x - z) \,}{|z|^{N + 2s}} \, dz.
\eeq
Set for all $(x,z)\in\R^{2N}$
\[\psi(x,z):=2u(x)-u(x+z)-u(x-z).\]
Then $\psi\in C^2(\R^{2N})$ is non-positive and by Taylor expansion and~\eqref{d2-alt} we have for all $z\in\partial B_r$
\[\lim_{|x|\to\infty}\psi(x,z)=0.\]
So, we can find $(x_r,z_r)\in\R^N\times\partial B_r$ which minimizes~$\psi$ in the same set. By Lagrange's rule there exists $\lambda\in\R$ s.t.
\[\begin{cases}
D_x\psi(x_r,z_r)=2Du(x_r)-Du(x_r+z_r)-Du(x_r-z_r)=0 \\
D_z\psi(x_r,z_r)=-Du(x_r+z_r)+Du(x_r-z_r)=\lambda z_r.
\end{cases}\]
By the first equality above and~\eqref{geo} we have $|x_r|<R$, hence $|x_r\pm z_r|<R+r$. Summarizing, we have for all $(x,z)\in\R^{2N}$ with $|z|=r\ge R$
\[\psi(x,z)\ge\psi(x_r,z_r)\ge 2\inf_{\eta\in B_R}u(\eta)-u(x_r+z_r)-u(x_r-z_r)\ge -A_1-B_1 \, |z|^{2s-\sigma}\]
for convenient $A_1,B_1>0$, where in the end we have used~\eqref{gce} along with $|x_r\pm z_r| < R+|z|$. So, for all $\delta>0$ we can find $r_\delta\ge R$ s.t.\ for all $x\in\R^N$
\[0\ge \int_{B^c_{r_\delta}}\frac{\, 2u(x)-u(x+z)-u(x-z) \,}{|z|^{N+2s}}\,dz> -\delta.\]
Besides, letting $r = r_\delta$ in estimate~\eqref{first_integral} we get
\[0\ge \int_{B_{r_\delta}}\frac{\, 2u(x)-u(x+z)-u(x-z) \,}{|z|^{N+2s}}\,dz \ge -N\omega_N \, \frac{r_\delta^{2-2s}}{\, 2-2s \,}\sup_{\eta\in B_{r_\delta(x)}}|D^2u(\eta)|,\]
and by~\eqref{d2-alt} the latter can be estimated from below by $-\delta$ provided $|x|$ is large enough. Thus, from~\eqref{split} we have for all $x\in\R^N$ with $|x|$ large enough
\[0\ge\fl u(x)\ge - C_{N,s} \,\delta,\]
and \eqref{vanish} follows.
\end{proof}

\noindent
We can now prove a detailed result about convexity of solutions of problem~\eqref{ivp} with a convex initial datum. First, for all $v\in C(\R^N)$, $\xi\in\R^N\setminus\{0\}$, we say that $v$ is $\xi$-{\em ruled} if the graph of~$v$ consists of straight lines $\ell \subset \mathbb R^{N + 1}$ whose projections onto~$\mathbb R^N$ have the direction~$\xi$, i.e., for all $x\in\R^N$ and $\mu\in\R$
\[2v(x)-v(x+\mu \, \xi)-v(x-\mu \, \xi)=0.\]
Note that if a $\xi$-ruled function~$v$ is convex, then any two lines $\ell_1,\ell_2$ in the graph of~$v$ whose projections onto~$\mathbb R^N$ have the direction~$\xi$ are parallel to each other. We also recall an elementary fact about convex functions: let $v\in C(\R^N)$ be convex and $S\subset\R^{N+1}$ be a line segment s.t.\ the graph of~$v$ contains the endpoints of~$S$ and at least one further point of~$S$, then $S$ is contained in the graph of~$v$.
\vskip2pt
\noindent
We shall prove that, if the initial datum~$u_0$ is convex, the solution $u(x,t)$ of problem~\eqref{ivp} stays convex in~$x$ at any time~$t$, is non-decreasing in~$t$ at any point~$x$ (in fact, it is strictly increasing except in a trivial case), and furthermore it is either strictly convex or ruled in some direction, at any time~$t$ (and such feature does not change in time):

\begin{theorem}\label{geosol}
Let $s\in(1/2,1)$, $u_0\in C(\R^N)$ be a convex function satisfying for all $x\in\R^N$
\beq\label{geogro}
u_0(x)\le A_0+B_0 \, |x|^{2s-\sigma} \ (A_0,B_0,\sigma>0),
\eeq
and $u$ be the solution of problem~\eqref{ivp} defined by~\eqref{cs}. Then:
\begin{enumroman}
\item\label{convexity} $u(\cdot,t)$ is convex for all $t\in\R^+_0$;
\item\label{ruled} if $u_0$ is $\xi$-ruled for some $\xi\in\R^N\setminus\{0\}$, then $u(\cdot,t)$ is $\xi$-ruled for all $t\in\R^+_0$, otherwise $u(\cdot,t)$ is strictly convex for all $t\in\R^+_0$.
\item\label{monotonicity} $u_t(x,t)\ge 0$ for all $(x,t)\in\R^N\times\R^+_0$, moreover $u_t(x_0,t_0)=0$ at some $(x_0,t_0)\in\R^N\times\R^+_0$ iff $u_0$ is affine and $u(x,t)=u_0(x)$ for all $(x,t)\in\R^N\times\R^+_0$.
\end{enumroman}
\end{theorem}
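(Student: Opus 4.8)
The plan is to exploit the convolution representation~\eqref{cs} together with the fact that the fractional heat kernel $p(\cdot,t)$ is a \emph{positive} probability density (by~\eqref{volume} and~\eqref{ke0}). Convexity-type inequalities pass through averages against a positive measure, so the three claims should reduce to pointwise statements about $p$ and $u_0$.

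For~\ref{convexity}, I would fix $t\in\R^+_0$, $x\in\R^N$, $z\in\R^N$, and compute
\[
2u(x,t)-u(x+z,t)-u(x-z,t)=\int_{\R^N}\bigl(2u_0(x-y)-u_0(x+z-y)-u_0(x-z-y)\bigr)\,p(y,t)\,dy,
\]
using the change of variable from the proof of Theorem~\ref{exun}\,\ref{solves}. The integrand is $\le 0$ by convexity of $u_0$, and $p(y,t)>0$, so the left-hand side is $\le 0$, which is exactly convexity of $u(\cdot,t)$ (the integrals converge by the same majorant estimates already established). The case $t=0$ is trivial since $u(\cdot,0)=u_0$. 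For~\ref{ruled}, the same identity shows that $u(\cdot,t)$ is $\xi$-ruled precisely when $2u_0(\cdot-y)-u_0(\cdot+\mu\xi-y)-u_0(\cdot-\mu\xi-y)=0$ for $p(\cdot,t)$-a.e.\ $y$, hence (by positivity of $p$ and continuity) for \emph{all} $y$, i.e.\ iff $u_0$ is $\xi$-ruled; if $u_0$ is not ruled in any direction, then for every $x,z$ with $z\ne 0$ the integrand is strictly negative on a set of positive measure, giving strict convexity of $u(\cdot,t)$. One must check that "$\xi$-ruled" is preserved for all $t$ simultaneously, but the equivalence just derived is $t$-independent, so this is immediate.

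For~\ref{monotonicity}, since $u$ solves~\eqref{fhe} we have $u_t(x,t)=-\fl u(x,t)$ for $t>0$; by Lemma~\ref{fl-convex}\,\ref{fl-convex2} applied to the convex function $u(\cdot,t)$ (which satisfies~\eqref{gc}, hence~\eqref{t1}, so Proposition~\ref{fl-convgro} gives convergence and~\eqref{fl-dd}), we get $\fl u(x,t)\le 0$, i.e.\ $u_t(x,t)\ge 0$. For $t=0$ one passes to the limit, or argues directly from~\eqref{ut} and the convolution identity for $p_t$. For the equality case: if $u_t(x_0,t_0)=0$ with $t_0>0$, then $\fl u(x_0,t_0)=0$, so by the strict part of Lemma~\ref{fl-convex}\,\ref{fl-convex2} the function $u(\cdot,t_0)$ must be affine; but then by~\ref{ruled} $u_0$ cannot be strictly convex, and in fact if $u_0$ were non-affine it would be $\xi$-ruled for some $\xi$ yet still non-affine, and one checks the average $I$ in~\eqref{flv} would be strictly positive — so $u_0$ is affine, whence $\fl u_0\equiv 0$ by Lemma~\ref{fl-affine}\,\ref{fl-affine2} and $u(x,t)\equiv u_0(x)$ solves~\eqref{ivp}, which by uniqueness (Theorem~\ref{exun}\,\ref{uniqueness}) is \emph{the} solution; conversely if $u_0$ is affine the constant-in-$t$ function is the solution and $u_t\equiv 0$. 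The case $t_0=0$ needs a short separate argument: $u_t(x_0,0)=0$ together with $u_t\ge 0$ and $t\mapsto u_t(x_0,t)$ continuous does not immediately force affineness, so I would instead observe that $u_t(x_0,0^+)=-\lim_{t\to 0^+}\fl u(x_0,t)$ and relate this back to $\fl u_0(x_0)$, or simply note $u_t(x_0,0)=0$ forces (via the representation and Fatou) $\fl u_0(x_0)$ finite and $=0$, hence by Lemma~\ref{fl-convex} $u_0$ affine.

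The main obstacle I anticipate is the equality case in~\ref{monotonicity}, specifically ruling out a non-affine but $\xi$-ruled $u_0$: one must verify that a convex, $\xi$-ruled, non-affine function still has $\fl u_0(x)\ne 0$ (equivalently $I>0$ in~\eqref{flv}) at every $x$, which requires a genuinely geometric argument — $\xi$-ruled only kills the variation along $\xi$, and non-affineness guarantees strict convexity in some complementary direction, so the spherical average defining $I$ cannot vanish. A secondary technical point is justifying differentiation under the integral and the limit $t\to 0^+$ for $u_t$, but the kernel estimates in Proposition~\ref{dp} and the bounds already used in the proof of Theorem~\ref{exun} cover this.
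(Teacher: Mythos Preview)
Your arguments for~\ref{convexity} and the forward direction of~\ref{ruled} match the paper's. For the converse in~\ref{ruled} the paper argues the contrapositive: if $u(\cdot,t)$ fails to be strictly convex, its graph contains a segment in some direction~$\xi$, so the integral identity forces $2u_0(z)-u_0(z+\mu\xi)-u_0(z-\mu\xi)=0$ for all~$z$ and all $\mu\in[0,1]$, which then extends to all~$\mu$ by shifting. Your version (fix one $z\ne 0$ and show the integrand cannot vanish identically unless $u_0$ is $z$-ruled) is equivalent but needs one extra remark you omitted: from $2u_0(w)=u_0(w+z)+u_0(w-z)$ for all~$w$ one gets that $t\mapsto u_0(w+tz)$ is convex with vanishing second difference of step~$1$, hence affine.

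For~\ref{monotonicity}, the inequality $u_t\ge 0$ is as in the paper. The equality case, however, has a genuine gap in your argument. After concluding that $u(\cdot,t_0)$ is affine, you try to reach ``$u_0$ affine'' by invoking~\ref{ruled} together with strict negativity of $\fl u_0$ for non-affine convex~$u_0$; but what vanishes is $\fl u(x_0,t_0)$, not $\fl u_0(x_0)$, so the information $\fl u_0<0$ gives no contradiction. The paper closes this step differently and more simply: since $u_t\ge 0$, for every $t\in(0,t_0)$ one has $u(\cdot,t)\le u(\cdot,t_0)$; a convex function on~$\R^N$ bounded above by an affine one is itself affine, hence $u(\cdot,t)$ is affine for all $t\in(0,t_0)$, and letting $t\to 0^+$ gives $u_0$ affine. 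In fact your own biconditional in~\ref{ruled} would also work, had you used it: $u(\cdot,t_0)$ affine means $u(\cdot,t_0)$ is $\xi$-ruled for \emph{every} $\xi\ne 0$, hence (by your biconditional, applied direction by direction) so is~$u_0$, and a function $\xi$-ruled for every~$\xi$ is affine. Finally, your separate worry about $t_0=0$ is moot: in the paper's notation $\R^+_0=(0,+\infty)$, so the claim only concerns $t_0>0$.
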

\begin{proof}
Since $2s>1$ and $u_0$ is convex, we may assume $\sigma\in(0,2s-1)$ and that the bound~\eqref{geogro} holds from below as well, i.e., that \eqref{gc0} holds. By Theorem~\ref{exun}, $u$ is the only solution of problem~\eqref{ivp} satisfying~\eqref{gc}.
\vskip2pt
\noindent
First we prove~\ref{convexity}. Fix $t\in\R^+_0$. By definition~\eqref{cs} and convexity of~$u_0$, and recalling that $p(x,t)>0$ for all $(x,t)\in\R^N\times\R^+_0$ (see~\eqref{ke0}), we have for all $x,y\in\R^N$
\begin{align*}
&2u(x,t)-u(x+y, \, t)-u(x-y, \, t) \\
&= 2\int_{\R^N}u_0(z) \, p(x-z, \, t)\,dz-\int_{\R^N}u_0(z) \, p(x+y-z, \, t)\,dz-\int_{\R^N}u_0(z) \, p(x-y-z, \, t)\,dz \\
&= 2\int_{\R^N}u_0(z) \, p(x-z, \, t)\,dz-\int_{\R^N}u_0(z'+y) \, p(x-z', \, t)\,dz'-\int_{\R^N}u_0(z''-y) \, p(x-z'', \, t)\,dz'' \\
&= \int_{\R^N}\big(2u_0(z)-u_0(z+y)-u_0(z-y)\big) \, p(x-z, \, t)\,dz \le 0,
\end{align*}
where we have set $z'=z-y$, $z''=z+y$, and then we have denoted $z$ all integration variables. So $u(\cdot,t)$ is convex (incidentally, the same argument proves that $u(\cdot,t)$ is strictly convex if $u_0$ is such).
\vskip2pt
\noindent
Now we prove~\ref{ruled}. Assume that $u_0$ is $\xi$-ruled for some $\xi\in\R^N\setminus\{0\}$. Then, arguing as above, we have for all $(x,t)\in\R^N\times\R^+_0$, $\mu>0$
\[2u(x,t)-u(x+\mu \, \xi,t)-u(x-\mu \, \xi,t)=\int_{\R^N}\big(2u_0(z)-u_0(z+\mu \, \xi)-u_0(z-\mu \, \xi)\big) \, p(x-z, \, t)\,dz=0,\]
i.e., $u(\cdot,t)$ is $\xi$-ruled.
\vskip2pt
\noindent
To complete the proof of~\ref{ruled} we show that if for some $t\in\R^+_0$ the function $u(\cdot,t)$ is not strictly convex, then the initial datum~$u_0$ is $\xi$-ruled for some $\xi\in\R^N$. Indeed, if $u(\cdot,t)$ is not strictly convex, then by the elementary property recalled above its graph contains a segment
\[S = \big\{(x,u(x,t)) \in \mathbb R^{N + 1}:\,x = \overline x \pm \mu \, \xi,\ \mu \in [0,1]\big\}\]
for some $\overline x\in\R^N$ and $\xi\in\R^N\setminus\{0\}$. Hence we have for all $\mu\in[0,1]$
\[0=2u(\overline x,t)-u(\overline x+\mu \, \xi,t)-u(\overline x-\mu \, \xi,t)=\int_{\R^N}\big(2u_0(z)-u_0(z+\mu \, \xi)-u_0(z-\mu \, \xi)\big) \, p(\overline x-z, \, t)\,dz.\]
By positivity of $p$ and convexity of~$u_0$ we deduce that for all $x\in\R^N$, $\mu\in[0,1]$
\beq\label{line}
2u_0(x)-u_0(x+\mu \, \xi)-u_0(x-\mu \, \xi)=0.
\eeq
Shifting $x$ to $x\pm\xi$ and applying an iterative argument shows that \eqref{line} holds in fact for all $\mu\ge 0$, i.e., $u_0$~is $\xi$-ruled, and the proof of~\ref{ruled} is complete.
\vskip2pt
\noindent
Now we prove~\ref{monotonicity}. Fix $(x,t)\in\R^N\times\R^+_0$. From part~\ref{convexity} and Theorem~\ref{exun} we know that $u(\cdot,t)\in C(\R^N)$ is a convex function satisfying~\eqref{gce} for convenient $A,B>0$, so Lemmas~\ref{fl-affine}, \ref{fl-convex} yield $\fl u(x,t)\in\R^-$. By equation~\eqref{fhe} we have then
\beq\label{incr}
u_t(x,t)=-\fl u(x,t)\ge 0.
\eeq
Assume now that $u_t(x_0,t_0)=0$ at some $(x_0,t_0)\in\R^N\times\R^+_0$. Then, by~\eqref{fhe} we have $\fl u(x_0,t_0)=0$, which by part~\ref{convexity} and Lemma~\ref{fl-convex} implies that $u(\cdot,t_0)$ is affine. For all $(x,t)\in\R^N\times(0,t_0)$, by~\eqref{incr} we have
\[u(x,t)\le u(x,t_0),\]
i.e., $u(\cdot,t)$ is a convex function defined in~$\R^N$ and bounded from above by the affine function $u(\cdot,t_0)$. Thus, $u(\cdot,t)$ is itself affine and the difference $u(\cdot,t) - u(\cdot,t_0)$ is constant. Passing to the limit as $t\to 0^+$, we deduce that~$u_0$ is affine. But then by definition~\eqref{cs} we have  $u(x,t)=u_0(x)$ for all $(x,t)\in\R^N\times\R^+_0$.
Now the proof is concluded.
\end{proof}

\noindent
Let us put into evidence that a similar result (apart from the maximal existence time) also holds in the case when $s = 1$, i.e.\ for the Laplacian. More precisely, recall that the classical heat kernel $K(x - y, \, t)$ is given by
$$
K(x - y, \, t)
=
\frac1{\, (4\pi \, t)^{N/2} \,} \, e^{-\frac{\, |x - y|^2 \,}{4t}}
.
$$
It is well known (see~\cite{J})  that if the initial datum $u_0$ satisfies~\eqref{gc0exp} then we may let
$$
T = \frac1{\, 4B \,}
$$
and define $u(x,0) = u_0(x)$ for all $x \in \mathbb R^N$ and
\begin{equation}\label{csclassical}
u(x,t)
=
\int_{\R^N} u_0(y) \, K(x-y, \, t)\,dy
\quad
\mbox{for all $(x,t) \in \mathbb R^N \times (0,T)$,}
\end{equation}
thus obtaining the canonical solution of the initial-value problem
\begin{equation}\label{classicalivp}
\begin{cases}
u_t = \Delta u & \text{in $\R^N\!\times\mathbb R^+_0$,} \\
u(x,0)=u_0(x) & \text{in $\R^N$.}
\end{cases}
\end{equation}
We have:

\begin{theorem}\label{geosolclassical}
Let $u_0\in C(\R^N)$ be a convex function satisfying for all $x\in\R^N$
\beq\label{expgro}
u_0(x) \le A \, e^{B \, |x|^2}
\quad
(A,B > 0)
.
\eeq
Then the three claims of Theorem~\ref{geosol}, with every instance of the half-line~$\mathbb R^+_0$ replaced by the bounded interval $(0,T)$, hold for the canonical solution $u(x,t)$ of problem~{\rm(\ref{classicalivp})}.
\end{theorem}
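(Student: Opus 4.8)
The plan is to re-run the proof of Theorem~\ref{geosol} with the Gaussian kernel $K$ in place of the fractional kernel $p$ and the interval $(0,T)$ in place of $\R^+_0$, using the parabolic strong maximum principle in place of the pointwise rigidity of Lemma~\ref{fl-convex}. First I would collect the elementary facts about $K$ that the argument uses: $K(\cdot,t)>0$ on $\R^N$; $\int_{\R^N}K(x,t)\,dx=1$; $K$ solves $u_t=\Delta u$ on $\R^N\times\R^+_0$; and, since $y\mapsto K(x-y,t)$ is a Gaussian probability density centred at $x$, the reproduction identity $\int_{\R^N}(a+\xi\cdot y)\,K(x-y,t)\,dy=a+\xi\cdot x$ for every affine function. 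I would also recall from~\cite{J} that, because a finite convex function is bounded below by an affine function and $u_0$ satisfies~\eqref{expgro}, the convolution~\eqref{csclassical} converges absolutely exactly for $t\in(0,T)$, so that $u\in C^\infty(\R^N\times(0,T))$ solves the heat equation there and $u(x,t)\to u_0(x_0)$ as $(x,t)\to(x_0,0)$.

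For claim~\ref{convexity} I would repeat verbatim the computation in the proof of Theorem~\ref{geosol}\,\ref{convexity}: after the changes of variable $z'=z-y$ and $z''=z+y$ one gets
\[
2u(x,t)-u(x+y,t)-u(x-y,t)=\int_{\R^N}\big(2u_0(z)-u_0(z+y)-u_0(z-y)\big)\,K(x-z,t)\,dz\le 0,
\]
the three pieces being absolutely convergent for $t<T$ by the bound~\eqref{expgro} and the affine lower bound. Claim~\ref{ruled} then follows exactly as in Theorem~\ref{geosol}: if $u_0$ is $\xi$-ruled the identity with $y=\mu\,\xi$ gives $2u(x,t)-u(x+\mu\,\xi,t)-u(x-\mu\,\xi,t)=0$; conversely, if $u(\cdot,t)$ is not strictly convex for some $t\in(0,T)$ its graph contains a segment, whence $2u_0(z)-u_0(z+\mu\,\xi)-u_0(z-\mu\,\xi)=0$ for all $z$ and $\mu\in[0,1]$ (positivity of $K$ plus continuity), and the same shifting/iteration argument upgrades this to all $\mu\ge 0$, i.e.\ $u_0$ is $\xi$-ruled; the strict-convexity alternative, and the incidental statement that strict convexity is propagated, are obtained along the way.

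The main obstacle is the rigidity half of claim~\ref{monotonicity}. From claim~\ref{convexity} and $u\in C^\infty$, the Hessian $D^2u(\cdot,t)$ is positive semidefinite for $t\in(0,T)$, so $u_t(x,t)=\Delta u(x,t)=\operatorname{tr}D^2u(x,t)\ge 0$; and if $u_0$ is affine the reproduction identity gives $u(x,t)=u_0(x)$ for all $(x,t)$, hence $u_t\equiv 0$. For the converse, unlike in the fractional setting there is no pointwise rigidity forcing $u(\cdot,t_0)$ to be affine from $\Delta u(x_0,t_0)=0$ alone; instead I would apply the parabolic strong maximum principle to $v:=u_t$, which again solves $v_t=\Delta v$ on $\R^N\times(0,T)$ and is nonnegative: if $v(x_0,t_0)=0$ at an interior point, then $v\equiv 0$ on $\R^N\times(0,t_0]$. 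Hence $\Delta u\equiv 0$ and $u(\cdot,t)$ is convex and harmonic for $t\in(0,t_0]$; since a positive semidefinite matrix of zero trace vanishes, $D^2u(\cdot,t)\equiv 0$, so $u(\cdot,t)$ is affine, and letting $t\to 0^+$ shows $u_0$ is affine, after which the reproduction identity gives $u\equiv u_0$ on $\R^N\times(0,T)$. The only subtlety is that the strong maximum principle is used on the unbounded slab $\R^N\times(0,T)$, which is harmless: it is a local statement propagated through overlapping parabolic cylinders and $\R^N$ is connected, so no growth control on $v$ is required (that would be needed only for the weak/comparison principle).
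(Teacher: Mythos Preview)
Your proposal is correct and follows essentially the same approach as the paper: claims~\ref{convexity} and~\ref{ruled} are handled by the identical convolution computation, and for~\ref{monotonicity} both you and the paper apply the parabolic strong minimum principle to $v=u_t\ge 0$ to get $v\equiv 0$ on $\R^N\times(0,t_0]$, then use that a convex harmonic function is affine. The only cosmetic difference is in the last step: you pass to the limit $t\to 0^+$ to conclude $u_0$ is affine, whereas the paper argues that $u_0\le u(\cdot,t)$ together with convexity forces $u_0$ to be affine; both are valid.
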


\begin{proof}
As in Theorem~\ref{geosol}, since $u_0$ is convex we may assume that the bound~\eqref{expgro} is satisfied both from above and from below, i.e., \eqref{gc0exp} holds. The proof of claims \ref{convexity} and \ref{ruled} follows the same lines as there.
\vskip2pt
\noindent
To prove \ref{monotonicity}, observe that $u(x,t)$ is convex in~$x$ for all $t \ge 0$ by \ref{convexity}, hence $u_t(x,t) = \Delta u(x,t) \ge 0$ for all $t \in (0,T)$. Furthermore, if the initial datum~$u_0$ is affine then the canonical solution is $u(x,t) = u_0(x)$ and therefore $u_t$ vanishes identically. To complete the proof, suppose that $u_t(x_0,t_0) = 0$ at some $(x_0,t_0) \in \mathbb R^N \times (0,T)$. Recall that $u(x,t)$ is smooth for $t > 0$, hence letting $v(x,t) = u_t(x,t)$ we have that $v$ is a non-negative solution of the heat equation
\[v_t = \Delta v \ \text{in $\mathbb R^N \times (0,T)$.}\]
Since we are assuming $v(x_0,t_0) = 0$, by the strong minimum principle for the heat equation (see~\cite[Chapter~7, Theorem~11]{Evans} or~\cite[Chapter~3, Theorem~5]{Protter-Weinberger}) we get $v(x,t) = 0$ for all $(x,t) \in \mathbb R^N \times (0,t_0]$, and the argument proceeds as in the proof of Theorem~\eqref{geosol}. But then $u(x,t)$ is a spatially convex function which is also harmonic (in~$x$) for all $t \in (0,t_0]$, hence $u(x,t)$ is affine in~$x$ for all $t \in (0,t_0]$. Since $u_0(x) \le u(x,t)$ for each $t \in (0,T)$, and $u_0$ is convex by assumption, it must be also affine. Now using~\eqref{csclassical} we obtain $u(x,t) = u_0(x)$ for all $t \in (0,T)$ (in fact, for all $t \in \mathbb R_0^+$) and the proof is complete.
\end{proof}

\section{Appendix: Estimates on the fractional heat kernel}\label{sec5}

\noindent
This section is devoted to studying the asymptotic behavior of the fractional heat kernel $p(x,t)$ defined in~\eqref{fhk}, along with its partial derivatives of any order with respect to the space variable(s) $x_1, \dots, x_N$, $N \ge 1$. We will see that such functions have a polynomial decay as $|x|\to\infty$, and prove Proposition~\ref{dp} as a special case (see~\cite{IKM,KR} and the references therein for similar estimates).
\vskip2pt
\noindent
We begin by recalling that $p(\cdot,t)$ is radially symmetric for all $t\in\R^+_0$ and satisfies the following scaling property: for all $(x,t)\in\R^N\times\R^+_0$ we have
\beq\label{scale}
p(x,t)=t^{-\frac{N}{2s}} \, p(t^{-\frac{1}{2s}} \, x, \, 1)
\eeq
as it can be easily seen by changing variable in~\eqref{fhk}. From~\cite[Eq.\ (2.1)]{BG} (where $\alpha$ stands for our $2s$ and $f_\alpha(t,x)$ for our $p(x,t)$) and from~\eqref{scale} we have for all $(x,t)\in\R^N\times\R^+_0$
\beq\label{rad}
p(x,t)=\frac{t^{-\frac{N}{2s}}}{\, (2\pi)^\frac{N}{2} \,} \, F_N(t^{-\frac1{2s}} \, |x|),
\eeq
where we have set for all $r\in\R^+_0$
\[F_N(r)=r^\frac{2-N}{2}\int_0^\infty e^{-\rho^{2s}}\rho^\frac{N}{2}J_\frac{N-2}{2}(r\rho)\,d\rho.\]
Here $J_\nu\in C^\infty(\C)$ is the Bessel function with order $\nu\in\R$. We know that $J_\nu(z)$ has a polynomial decay as $|z|\to\infty$ and satisfies the following identity for all $z\in\C$, $\nu\in\R$:
\beq\label{bes}
zJ'_\nu(z)-\nu J_\nu(z)=-zJ_{\nu+1}(z).
\eeq
So, we are reduced to the study of~$F_N$. Such a study goes back to~\cite{P} for $N=1$. By the decay of $J_\nu$, the integral which defines~$F_N$ converges for all $r\in\R^+_0$. Moreover we have $F_N\in C^\infty(\R^+_0)$ and we can differentiate with respect to $r$ within the integral.
\vskip2pt
\noindent
One special feature of~$F_N$ is that its derivatives $D^k F_N$ of any order~$k$ can be expressed by means of functions $F_{N + 2j}$ ($j = 1, \dots, k$):

\begin{lemma}\label{fn1}
For all integer $k\ge 1$ there exist non-negative constants $\alpha_{j,k}\in\R^+$, defined for $j = 0, \dots, k + 1$ and positive iff $k \le 2j \le 2k$, s.t.\ for all $N\ge 1$ and all $r\in\R^+_0$
\beq\label{derf}
D^k F_N(r)=\sum_{k \le 2j \le 2k} (-1)^j \, \alpha_{j,k} \, r^{2j-k} \, F_{N+2j}(r).
\eeq
\end{lemma}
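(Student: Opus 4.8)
The plan is to establish the formula \eqref{derf} by induction on $k$, where the base case $k = 1$ is a direct computation from the integral representation of $F_N$ combined with the Bessel identity \eqref{bes}. To carry out the base case, I would differentiate
\[
F_N(r) = r^{\frac{2-N}{2}} \int_0^\infty e^{-\rho^{2s}} \rho^{\frac{N}{2}} J_{\frac{N-2}{2}}(r\rho)\, d\rho
\]
under the integral sign. The derivative of the prefactor $r^{\frac{2-N}{2}}$ produces a term $\frac{2-N}{2} r^{-1} F_N(r)$, while differentiating $J_{\frac{N-2}{2}}(r\rho)$ in $r$ brings down a factor $\rho$; using \eqref{bes} with $\nu = \frac{N-2}{2}$ in the form $z J_\nu'(z) = \nu J_\nu(z) - z J_{\nu+1}(z)$ (applied with $z = r\rho$) lets me rewrite $\rho J_\nu'(r\rho)$ as $\frac{1}{r}\big(\nu J_\nu(r\rho) - r\rho J_{\nu+1}(r\rho)\big)$. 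The $\nu J_\nu$ piece cancels the prefactor contribution, and the remaining $-\rho J_{\nu+1}(r\rho)$ piece, after accounting for the change of order $\nu \mapsto \nu + 1 = \frac{N}{2} = \frac{(N+2)-2}{2}$ and the extra power of $\rho$, reassembles exactly into $-r \cdot (\text{const}) \cdot F_{N+2}(r)$. This gives $D F_N(r) = -\alpha_{1,1}\, r\, F_{N+2}(r)$ with $\alpha_{1,1} > 0$, matching \eqref{derf} for $k=1$ (the only admissible index is $j = 1$, since $1 \le 2j \le 2$ forces $j = 1$).

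For the inductive step, I would assume \eqref{derf} holds for some $k \ge 1$ and differentiate once more. Each summand $(-1)^j \alpha_{j,k}\, r^{2j-k} F_{N+2j}(r)$ contributes two terms upon differentiation: the product rule hitting $r^{2j-k}$ gives $(2j-k)(-1)^j \alpha_{j,k}\, r^{2j-k-1} F_{N+2j}(r)$, a term of the form $r^{2j-(k+1)} F_{N+2j}(r)$; and hitting $F_{N+2j}(r)$ gives, via the base-case identity applied to $F_{N+2j}$, a term $(-1)^{j+1} \alpha_{j,k}\alpha_{1,1}\, r^{2j-k+1} F_{N+2(j+1)}(r) = (-1)^{j+1}(\cdots)\, r^{2(j+1)-(k+1)} F_{N+2(j+1)}(r)$. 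Relabeling $j \mapsto j$ in the first family and $j \mapsto j-1$ in the second, both families have the shape $(-1)^j (\text{const})\, r^{2j-(k+1)} F_{N+2j}(r)$ and collect into a single sum with new coefficients $\alpha_{j,k+1}$. I would then check the index range: the first family contributes to $j$ with $k \le 2j \le 2k$, and since the coefficient $(2j-k)$ vanishes precisely when $2j = k$, the effective lower end becomes $2j \ge k+1$, i.e.\ $k+1 \le 2j$; the second family, after shifting, contributes for $k+2 \le 2j \le 2k+2$. Their union is exactly $k+1 \le 2j \le 2k+2 = 2(k+1)$, as required, and the overlap region $k+2 \le 2j \le 2k$ ensures positivity is inherited; the extreme cases $2j = k+1$ (only from the first family) and $2j = 2k+2$ (only from the second) each receive a single strictly positive contribution, so no cancellation destroys positivity at the endpoints.

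The main obstacle I anticipate is purely bookkeeping: tracking the parity constraint ``$k \le 2j \le 2k$'' correctly through the relabeling, and in particular verifying that the coefficient $\alpha_{j,k+1}$ is \emph{strictly} positive on the full admissible range rather than only nonnegative. The sign pattern $(-1)^j$ is preserved automatically because the first family keeps the sign $(-1)^j$ and the second, coming from a term with sign $(-1)^{j-1}$ multiplied by the base-case sign $(-1)^1$, also yields $(-1)^j$; hence the two families never cancel each other, only add. The one genuinely delicate point is the term with $2j = k$ in the inductive hypothesis: its contribution to the first family carries the factor $(2j - k) = 0$ and so drops out, which is exactly what makes the lower index in \eqref{derf} jump from $k$ to $k+1$ when passing from $D^k F_N$ to $D^{k+1} F_N$; I would make sure to remark on this explicitly. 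Everything else is the elementary product rule plus the single Bessel identity \eqref{bes}, so once the index combinatorics is nailed down the proof is complete.
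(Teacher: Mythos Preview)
Your proposal is correct and follows essentially the same route as the paper: induction on $k$, with the base case $DF_N(r) = -r\,F_{N+2}(r)$ obtained from the Bessel identity \eqref{bes}, and the inductive step carried out via the product rule, the index shift $j \mapsto j-1$ in the second family, and the observation that the factor $(2j-k)$ kills the bottom term so the range becomes $k+1 \le 2j \le 2(k+1)$. Your discussion of why the coefficients $\alpha_{j,k+1}$ are strictly positive on the full admissible range (same sign $(-1)^j$ in both families, no cancellation, endpoints receive a single positive contribution) is in fact more explicit than the paper's, which simply records the recursion $\alpha_{j,k+1} = (2j-k)\,\alpha_{j,k} + \alpha_{j-1,k}$ and leaves the positivity check to the reader.
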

\begin{proof}
We argue by induction on $k$. First, setting $k=1$, we have for all $N \ge 1$
and $r \in \mathbb R^+_0$
\begin{align}\label{basefn}
DF_N(r) &= \frac{\, 2-N \,}{2} \, r^{-\frac{N}{2}}\int_0^\infty e^{-\rho^{2s}}\rho^\frac{N}{2}J_\frac{N-2}{2}(r\rho)\,d\rho+r^\frac{2-N}{2}\int_0^\infty e^{-\rho^{2s}}\rho^\frac{N+2}{2}J'_\frac{N-2}{2}(r\rho)\,d\rho \\
&= r^{-\frac{N}{2}}\int_0^\infty e^{-\rho^{2s}}\rho^\frac{N}{2}\Big(\frac{\, 2-N \,}{2} \, J_\frac{N-2}{2}(r\rho)+r\rho\, J'_\frac{N-2}{2}(r\rho)\Big)\,d\rho \nonumber \\
&= r^{-\frac{N}{2}}\int_0^\infty e^{-\rho^{2s}}\rho^\frac{N}{2}\big(-r\rho \, J_\frac{N}{2}(r\rho)\big)\,d\rho
= -r \, F_{N+2}(r), \nonumber
\end{align}
where we have used~\eqref{bes}. So \eqref{derf} holds for $k = 1$ with $\alpha_{1,1} = 1$ and $\alpha_{0,1} = \alpha_{2,1} = 0$.
\vskip2pt
\noindent
Now assume that \eqref{derf} holds true for some $k\ge 1$. By such assumption and~\eqref{basefn} we have for all $N\ge 1$ and all $r\in\R^+_0$
\begin{align}\label{sums}
D^{k+1}F_N(r) &= D\Big[\sum_{k \le 2j \le 2k} (-1)^j \, \alpha_{j,k} \, r^{2j-k} \, F_{N+2j}(r)\Big] \\
&= \sum_{k \le 2j \le 2k} (-1)^j \, (2j-k) \, \alpha_{j,k}\, r^{2j-k-1} \, F_{N+2j}(r)-\!\!\sum_{k \le 2j \le 2k} (-1)^j \, \alpha_{j,k}\, r^{2j-k+1} \, F_{N+2j+2}(r). \nonumber\\
\noalign{\medskip}
&=: S_1+S_2.\nonumber
\end{align}
Since the coefficient $2j - k$ vanishes when $k$ is even and $j=k/2$, and since $\alpha_{k + 1, \, k} = 0$ we may rewrite the first sum in~\eqref{sums} as follows:
\[S_1=\sum_{k + 1 \le 2j \le 2(k + 1)} (-1)^j \, (2j-k) \, \alpha_{j,k}\, r^{2j-k-1} \, F_{N+2j}(r).\]
Letting $h = j + 1$, and recalling that $\alpha_{h - 1, k} = 0$ when $k + 1 =2h$, the last sum in~\eqref{sums} becomes
\begin{align*}
S_2 &= \sum_{k \le 2(h - 1) \le 2k} (-1)^h \, \alpha_{h - 1,k} \, r^{2h-k-1} \, F_{N+2h}(r) \\
&= \sum_{k + 1 \le 2h \le 2(k + 1)} (-1)^h \, \alpha_{h - 1,k} \, r^{2h-k-1} \, F_{N+2h}(r).
\end{align*}
Hence \eqref{sums} is reduced to
\[D^{k+1}F_N(r) = \sum_{k + 1 \le 2j \le 2(k + 1)} (-1)^j \alpha_{j,k+1}\, r^{2j-(k+1)} F_{N+2j}(r),\]
with $\alpha_{j, \, k+1} = (2j - k) \, \alpha_{j,k} + \alpha_{j - 1, \, k}$. So \eqref{derf} also holds for $k+1$, which completes the proof.
\end{proof}

\noindent
From Lemma~\ref{fn1} we deduce the asymptotic behavior of all derivatives of~$F_N$ as $r\to\infty$:

\begin{lemma}\label{fn2}
For all integer $N\ge 1$, $k\ge 0$ the following limit is finite:
\beq\label{lim}
\lim_{r \to \infty} r^{N+2s+k} \, D^k F_N(r)=\ell_{N,k},
\eeq
with
\[\ell_{N,0}=\frac{\, 2^\frac{N+4s}{2}s \,}{\pi}\,\sin(\pi s)\,\Gamma\Big(\frac{N+2s}{2}\Big)\,\Gamma(s),\]
\begin{equation}\label{ell_Nk}
\ell_{N,k}=(-1)^k \, \ell_{N,0} \prod_{j = 0}^{k - 1} (N + 2s + j) \quad (k\ge 1).
\end{equation}
\end{lemma}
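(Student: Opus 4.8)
The plan is to reduce the general statement to the case $k=0$ and then to bootstrap via Lemma~\ref{fn1}. Indeed, \eqref{derf} gives
\[
r^{N+2s+k}\,D^kF_N(r)=\sum_{k\le 2j\le 2k}(-1)^j\,\alpha_{j,k}\,r^{(N+2j)+2s}\,F_{N+2j}(r),
\]
so once we know that $r^{M+2s}F_M(r)\to\ell_{M,0}$ as $r\to\infty$ for every integer $M\ge1$, the limit \eqref{lim} exists and equals $\sum_{k\le 2j\le 2k}(-1)^j\alpha_{j,k}\,\ell_{N+2j,0}$. To see that this coincides with the right-hand side of \eqref{ell_Nk}, one first notes, from the explicit value of $\ell_{M,0}$ and the functional equation $\Gamma(z+1)=z\Gamma(z)$, that $\ell_{N+2j,0}=\ell_{N,0}\prod_{i=0}^{j-1}(N+2s+2i)$; then, writing $a=N+2s$ and $Q_k(a)=\sum_{k\le 2j\le 2k}(-1)^j\alpha_{j,k}\prod_{i=0}^{j-1}(a+2i)$, an induction on $k$ based on the recursion $\alpha_{j,k+1}=(2j-k)\alpha_{j,k}+\alpha_{j-1,k}$ of Lemma~\ref{fn1} gives $Q_{k+1}(a)=-(a+k)\,Q_k(a)$, whence $Q_k(a)=(-1)^k\prod_{i=0}^{k-1}(a+i)$. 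This is exactly \eqref{ell_Nk}, and it also yields the neat recursion $\ell_{N,k+1}=-(N+2s+k)\,\ell_{N,k}$.

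It remains to prove the case $k=0$, namely $\lim_{r\to\infty}r^{N+2s}F_N(r)=\ell_{N,0}$, which is the real content of the lemma. Starting from $F_N(r)=r^{(2-N)/2}\int_0^\infty e^{-\rho^{2s}}\rho^{N/2}J_{(N-2)/2}(r\rho)\,d\rho$, I would insert the Mellin--Barnes representation $e^{-\rho^{2s}}=\frac{1}{2\pi i}\int_{(c)}\Gamma(w)\,\rho^{-2sw}\,dw$, interchange the two integrations, and evaluate the inner $\rho$-integral by the Weber--Schafheitlin formula $\int_0^\infty\rho^{\mu-1}J_\nu(r\rho)\,d\rho=2^{\mu-1}r^{-\mu}\,\frac{\Gamma((\nu+\mu)/2)}{\Gamma((\nu-\mu)/2+1)}$. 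After simplification this produces the exact representation
\[
F_N(r)=\frac{1}{2\pi i}\int_{(c_0)}\Gamma(w)\,\frac{\Gamma(N/2-sw)}{\Gamma(sw)}\,2^{N/2-2sw}\,r^{2sw-N}\,dw,
\]
valid for $c_0$ in a suitable vertical strip, chosen so that both the Mellin transform of $e^{-\rho^{2s}}$ and the Weber--Schafheitlin integral converge.

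Then I would push the contour to the left, past the simple pole at $w=-1$. The pole of $\Gamma(w)$ at $w=0$ is killed by the zero of $1/\Gamma(sw)$ there, so the first pole met is the one at $w=-1$, where $\Gamma$ has residue $-1$ and the residue of the whole integrand is $-\,\Gamma(N/2+s)\,2^{N/2+2s}\,r^{-N-2s}/\Gamma(-s)$. By Stirling's asymptotics the integrand decays like $e^{-\pi|{\rm Im}\,w|/2}$ on vertical lines, so the horizontal connecting segments vanish in the limit and the shifted vertical integral is $o(r^{-N-2s})$; hence $r^{N+2s}F_N(r)\to-\,\Gamma(N/2+s)\,2^{N/2+2s}/\Gamma(-s)$. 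Finally the reflection formula $\Gamma(s)\Gamma(1-s)=\pi/\sin(\pi s)$ turns $-1/\Gamma(-s)$ into $s\,\sin(\pi s)\,\Gamma(s)/\pi$, producing precisely $\ell_{N,0}$. Since this representation is exact and differentiation under the integral sign is legitimate, one could in fact read off every $D^kF_N$ at once by the same residue computation, recovering \eqref{ell_Nk} without Lemma~\ref{fn1}.

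The whole difficulty sits in this $k=0$ step: the integral defining $F_N$ is only conditionally convergent, since $J_\nu$ oscillates while decaying only like $r^{-1/2}$, so the interchange of integrations and the contour shift must be justified carefully, and this is exactly where the Stirling bounds and the precise location of the strip containing $c_0$ are used; bookkeeping the multiplicative constant through the reflection (and, if one prefers a dimension recursion, duplication) formula for $\Gamma$ is the other delicate point. A variant that avoids the Bessel integral altogether is to invoke Bochner's subordination, writing $p(x,t)=\int_0^\infty(4\pi\sigma)^{-N/2}e^{-|x|^2/(4\sigma)}\eta_t(\sigma)\,d\sigma$ with $\eta_t$ the density of the $s$-stable subordinator, and combining the tail estimate $\eta_1(\sigma)\sim s\,\sigma^{-1-s}/\Gamma(1-s)$ with dominated convergence; through \eqref{rad} this again identifies $\ell_{N,0}$ with $(2\pi)^{N/2}C_{N,s}$, the same value.
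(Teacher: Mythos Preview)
Your proposal is correct, and the overall skeleton---use Lemma~\ref{fn1} to write $r^{N+2s+k}D^kF_N(r)$ as a finite combination of $r^{(N+2j)+2s}F_{N+2j}(r)$, then invoke the $k=0$ case for each term---is exactly what the paper does. The two proofs diverge in the remaining two steps.

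For the base case $k=0$, the paper does not prove anything: it simply cites Theorem~2.1 of Blumenthal--Getoor~\cite{BG}, which already contains both the existence of the limit and the explicit value of~$\ell_{N,0}$. Your Mellin--Barnes/Weber--Schafheitlin computation (and the subordination variant you mention) is a legitimate and self-contained way to recover that result; the residue and reflection-formula bookkeeping you wrote down checks out. So here you do strictly more work than the paper, at the benefit of not importing an external theorem.

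For the identification of $\ell_{N,k}$ with the product in~\eqref{ell_Nk}, the paper takes a shorter path than your combinatorial induction on the~$\alpha_{j,k}$. Once the existence of each limit $\ell_{N,k}$ has been secured (from the previous paragraph), the paper applies l'H\^opital's rule to the quotient $D^kF_N(r)/r^{-(N+2s+k)}$, obtaining directly
\[
\ell_{N,k}=-\frac{\ell_{N,k+1}}{\,N+2s+k\,},
\]
from which~\eqref{ell_Nk} follows by unwinding. This bypasses entirely the recursion $\alpha_{j,k+1}=(2j-k)\alpha_{j,k}+\alpha_{j-1,k}$ and the polynomial $Q_k(a)$ you introduce; your induction $Q_{k+1}(a)=-(a+k)Q_k(a)$ is correct, but the l'H\^opital trick gets the same recursion in one line without tracking any coefficients.
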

\begin{proof}
The case $k=0$ (and $N$~arbitrary) follows from Theorem~2.1 of~\cite{BG}, together with the value of $\ell_{N,0}$ (for the notation of~\cite{BG} see~(\ref{rad}) and the preceding comments). To go further we pick $k \ge 1$ and apply Lemma~\ref{fn1}:
\begin{align*}
\lim_{r\to\infty}r^{N+2s+k}D^k F_N(r) &= \sum_{k \le 2j \le 2k} (-1)^j \,\alpha_{j,k}\,\lim_{r\to\infty}r^{N+2s+2j}F_{N+2j}(r) \\
&= \sum_{k \le 2j \le 2k} (-1)^j \alpha_{j,k} \, \ell_{N+2j, \, 0}.
\end{align*}
This proves the existence of the finite limit~\eqref{lim} for all~$N,k$. We can recursively determine $\ell_{N,k}$ by applying de l'H\^opital's rule:
\[\ell_{N,k} = \lim_{r \to +\infty} \frac{\, D^k F_N(r) \,}{\, r^{-(N + 2s + k)} \,} = -\lim_{r \to +\infty}\frac{\, r^{N + 2s + k + 1} \, D^{k + 1} F_N(r) \, }{\, N + 2s + k \,} =-\,\frac{\ell_{N, \, k + 1}}{\, N + 2s + k \,},\]
from which~\eqref{ell_Nk} follows.
\end{proof}

\noindent
We are now in a position to prove the estimates~\eqref{dps}-\eqref{dpt} on the derivatives of the fractional heat kernel~$p$.

\begin{proof}[Proof of Proposition~\ref{dp}.]
Let us prove~\ref{dp1}. Denote by $g \in C^\infty(\mathbb R^N)$ the function $g(x) = (2\pi)^\frac{N}2 \, p(x,1)$, so that for all $x \in \mathbb R^N \setminus \{0\}$ we have
\[g(x)=F_N(|x|),\]
and \eqref{rad} rephrases as
\beq\label{pg}
p(x,t)=\frac{t^{-\frac{N}{2s}}}{\, (2\pi)^\frac{N}{2} \,} \, g(t^{-\frac1{2s}} \, x).
\eeq
First we show that for any multi-index $\alpha$, with height $|\alpha|=k$, there exist polynomials $P_{\alpha,1}(\xi), \ldots,P_{\alpha,k}(\xi)$, $\xi \in \R^N$, s.t.\ for all $x\in\R^N\setminus\{0\}$
\beq\label{derg}
D^\alpha g(x)=\sum_{j=1}^k D^jF_N(|x|) \, |x|^{j-k} \, P_{\alpha,j}\Big(\frac{x}{|x|}\Big).
\eeq
Equality~\eqref{derg} is easily proved by induction on $k$. For $k=1$ the multi-index~$\alpha$ coincides with some element~$e_i$,  $i\in\{1,\ldots, N\}$, of the canonical basis of~$\mathbb R^N$, and we have
\[D^\alpha g(x)=DF_N(|x|) \, \frac{x_i}{|x|}.\]
Now fix $k \ge 1$ and assume that \eqref{derg} holds for some $|\alpha|=k$, then let $\alpha'=(\alpha_1,\ldots, \alpha_i+1,\ldots,\alpha_N)$ ($i\in\{1,\ldots,N\}$). By differentiation we find
\begin{align*}
D^{\alpha'} g(x) &= \sum_{j=1}^k\Big[\frac\partial{\partial x_i}\big(D^jF_N(|x|) \, |x|^{j-k}\big) \, P_{\alpha,j}\Big(\frac{x}{|x|}\Big)+D^jF_N(|x|) \, |x|^{j-k} \, \frac\partial{\partial x_i}\, P_{\alpha,j}\Big(\frac{x}{|x|}\Big)\Big] \\
&= \sum_{j=1}^k\Big[D^{j+1}F_N(|x|) \, |x|^{j-k} \, \frac{x_i}{|x|} \,P_{\alpha,j}\Big(\frac{x}{|x|}\Big)+D^jF_N(|x|) \, (j-k) \, |x|^{j-k-1} \,\frac{x_i}{|x|} \, P_{\alpha,j}\Big(\frac{x}{|x|}\Big)\Big] \\
&\qquad+ \sum_{j=1}^k D^jF_N(|x|) \, |x|^{j-k} \, \sum_{h = 1}^N\frac1{|x|} \Big(\delta_{i,h}-\frac{x_i}{|x|} \, \frac{x_h}{|x|}\Big) \, \frac{\partial P_{\alpha,j}}{\partial \xi_h} \Big(\frac{x}{|x|}\Big) \\
&= \sum_{j=1}^{k+1}D^jF_N(|x|) \, |x|^{j-(k+1)} \, P_{\alpha',j}\Big(\frac{x}{|x|}\Big),
\end{align*}
where $\delta_{i,h}$ denotes Kronecker's delta and $P_{\alpha',1},\ldots, P_{\alpha',k+1}$ are convenient polynomials. So \eqref{derg} is achieved.
\vskip2pt
\noindent
Now set
\[M_k=\max\big\{|P_{\alpha,j}(\xi)|:\,|\alpha|=k,\ j=1,\ldots,k,\ \xi\in\partial B_1\big\}.\]
By~\eqref{derg} we have for all $|\alpha|=k$ and all $x\in\R^N\setminus\{0\}$
\[|x|^{N+2s+k} \, |D^\alpha g(x)|\le M_k\sum_{j=1}^k|x|^{N+2s+j} \,\big|D^jF_N(|x|)\big|,\]
and the latter tends to $M_k\sum_{j=1}^k |\ell_{N,j}|$ as $|x|\to\infty$, by Lemma~\ref{fn2}. Consequently the left-hand side is bounded in the complement $B^c_1$ of the unit ball. Furthermore $|D^\alpha g(x)|$ is bounded by continuity in the ball~$B_1$. Hence we can find $C_{N,k}>0$ s.t.\ for all $|\alpha|=k$ and all $x\in\R^N$
\[|D^\alpha g(x)|\le(2\pi)^\frac{N}{2} \, C_{N,k}\,\min\Big\{1,\,\frac1{\, |x|^{N+2s+k} \,}\Big\}\]
(with the convention $\min\{a,\frac1{\, 0 \,}\} = a$). Finally, by~\eqref{pg} we have for all $|\alpha|=k$ and all $(x,t)\in\R^N\times\R^+_0$
\[|D^\alpha p(x,t)| = \frac{\, t^{-\frac{N+k}{2s}} \,}{(2\pi)^\frac{N}{2}} \, |D^\alpha g(t^{-\frac{1}{2s}}x)| \le
C_{N,k}\,\min\Big\{\frac1{\, t^\frac{N + k}{2s} \,},\,\frac{t}{|x|^{N+2s+k}}\Big\},\]
which proves~\ref{dp1}.
\vskip2pt
\noindent
Now we prove~\ref{dp2}. By differentiating~\eqref{scale} with respect to~$x_i$ ($i\in\{1,\ldots N\}$) we get for all $(x,t)\in\R^N\times\R^+_0$
\beq\label{space}
p_i(x,t) = t^\frac{-N-1}{2s} \, p_i(t^{-\frac1{2s}} \, x,1).
\eeq
By differentiating in $t$, instead, we obtain
\[p_t(x,t) = - \, \frac{N}{\, 2s \,} \, t^{-\frac{N+2s}{2s}} \, p(t^{-\frac1{2s}} \, x, \, 1)- \frac1{\, 2s \,} \,  t^{-\frac{N+2s+1}{2s}}\,\sum_{i = 1}^N x_i \, p_i(t^{-\frac1{2s}} \, x, \, 1).\]
Using~\eqref{scale} again and recalling~\eqref{space}, we may write
\[p_t(x,t) = -\,\frac{N}{\, 2s \,} \, t^{-1} \, p(x,t) - \frac1{\, 2s \,} \,  t^{-1}\,\sum_{i = 1}^N x_i \, p_i(x,t).\]
Letting $|\alpha| = 1$ in~\eqref{dps} and using~\eqref{ke0} we arrive at
\beq\label{twobounds}
|p_t(x,t)| \le C_1\,\min\Big\{\frac1{\, t^\frac{N+2s}{2s} \,}, \, \frac1{\, |x|^{N + 2s} \,} \Big\} + C_2\,\min\Big\{\frac{|x|}{\, t^\frac{N+2s+1}{2s} \,}, \, \frac1{\, |x|^{N + 2s} \,} \Big\},
\eeq
for convenient $C_1,C_2 > 0$. In fact, the second term in the estimate above is dominated by the first, as two cases may occur:
\begin{itemize}[leftmargin=0.5cm]
\item if $|x|<t^\frac{1}{2s}$, then $t^{-\frac{N+2s+1}{2s}} \, |x| < t^{-\frac{N+2s}{2s}}$, hence
\[\min\Big\{\frac{|x|}{\, t^\frac{N+2s+1}{2s} \,}, \, \frac1{\, |x|^{N + 2s} \,} \Big\} = \frac{|x|}{\, t^\frac{N+2s+1}{2s} \,} < \frac1{\, t^\frac{N+2s}{2s} \,} = \min\Big\{\frac1{\, t^\frac{N+2s}{2s} \,}, \, \frac1{\, |x|^{N + 2s} \,} \Big\};\]
\item if $|x|\ge t^\frac{1}{2s}$, then we easily get
\[\min\Big\{\frac{|x|}{\, t^\frac{N+2s+1}{2s} \,}, \, \frac1{\, |x|^{N + 2s} \,} \Big\} = \frac{1}{\,|x|^{N+2s}\,} = \min\Big\{\frac1{\, t^\frac{N+2s}{2s} \,}, \, \frac1{\, |x|^{N + 2s} \,} \Big\}.\]
\end{itemize}
Therefore \eqref{twobounds} proves \ref{dp2}.
\end{proof}

\begin{remark}
Note that the derivative $D^\alpha p(x,t)$ may well vanish somewhere, hence $|D^\alpha p(x,t)|$ cannot be estimated from below for all $|\alpha| = k > 0$ by means of a positive function over $\mathbb R^N \times \R^+_0$ (as for $p(x,t)$ in~\eqref{ke0}). As an example we may let $\alpha = (1, 0, \dots, 0)$. Since $p(\cdot,t)$ is a radial function we have $D^\alpha p(x,t) = 0$ whenever $x_1 = 0$. In fact, all directional derivatives of the first order along a tangential (i.e., orthogonal to~$x$) direction vanish identically. As a further example we may let $\alpha = (2, 0, \dots, 0)$ and $x = (r, 0, \dots, 0)$, $r > 0$. In this case we have $D^\alpha g(x) =D^2 F_N(r)$, which is positive for $r$ large by Lemma~\ref{fn2}. Besides, $D^2 F_N(r)$ is negative for $r$ close to~$0$ by Lemma~\ref{fn1} and by~(\ref{rad}), therefore $D^\alpha g(x)$ must vanish somewhere.
\end{remark}

\medskip

\noindent
{\small {\bf Acknowledgement.} The authors are members of the Gruppo Nazionale per l'Analisi Matematica, la Probabilit\`a e le loro Applicazioni (GNAMPA) of the Istituto Nazionale di Alta Matematica (INdAM). This work was partially supported by the GNAMPA Research Project 'Regolarit\`a, esistenza e propriet\`a geometriche per le soluzioni di equazioni con operatori frazionari non lineari' (2017). We would like to thank Prof.\ Filomena Pacella for some fruitful discussion, which allowed us to enrich the results of this paper. We are grateful to the anonymous Referee for having carefully read the manuscript and having pointed out some relevant references.}

\medskip


\begin{thebibliography}{99}

\bibitem{BPSV}
{\sc B.\ Barrios, I.\ Peral, F.\ Soria, E.\ Valdinoci,}
A Widder's type theorem for the heat equation with non-local diffusion,
{\em Arch. Rational Mech. Anal.} {\bf 213} (2014) 629--650.

\bibitem{BG}
{\sc R.M.\ Blumenthal, R.H.\ Getoor,}
Some theorems on stable processes,
{\em Trans. Amer. Math. Soc.} {\bf 95} (1960) 263--273.
 
\bibitem{BGR}
{\sc K.\ Bogdan, T.\ Grzywny, M.\ Ryznar,}
Heat kernel estimates for the fractional Laplacian with Dirichlet conditions,
{\em Ann. Prob.} \textbf{38} (2010) 1901--1923.

\bibitem{BV}
{\sc C.\ Bucur, E.\ Valdinoci,}
Non-local diffusion and applications,
Springer, New York (2016).

\bibitem{CS}
{\sc X.\ Cabr\'e, Y.\ Sire,}
Nonlinear equations for fractional Laplacians I: Regularity, maximum principles, and Hamiltonian estimates,
{\em Ann. Inst. H. Poincar\'e Anal. Non Lin\'eaire} \textbf{367} (2014) 911--941.

\bibitem{CS1}
{\sc X.\ Cabr\'e, Y.\ Sire,}
Nonlinear equations for fractional Laplacians II: Existence, uniqueness, and qualitative properties of solutions,
{\em Trans. Amer. Math. Soc.} \textbf{31} (2014) 23--53.

\bibitem{C}
{\sc L.\ Caffarelli},
Non-local diffusions, drifts and games,
in {\sc H.\ Holden, K.H.\ Karlsen} (eds.), Nonlinear partial differential equations, Springer, New York (2012).

\bibitem{CF}
{\sc L.\ Caffarelli, A.\ Figalli},
Regularity of solutions to the parabolic fractional obstacle problem,
{\em J. Reine Angew. Math.} {\bf 680} (2013) 191--233.

\bibitem{CS2}
{\sc L.\ Caffarelli, L.\ Silvestre},
An extension problem related to the fractional Laplacian, 
{\em Comm. Partial Differential Equations} {\bf 32} (2007) 1245--1260.

\bibitem{CKS}
{\sc Z.Q.\ Chen, P.\ Kim, R.\ Song,}
Heat kernel estimates for the Dirichlet fractional Laplacian,
{\em J. Eur. Math. Soc.} \textbf{12} (2010) 1307--1329.

\bibitem{DPV}
{\sc E.\ Di Nezza, G.\ Palatucci, E.\ Valdinoci},
Hitchhiker's guide to the fractional Sobolev spaces,
{\em Bull. Sci. Math.} {\bf 136} (2012) 521--573.

\bibitem{Evans}
{\sc L.\ C.\ Evans,}
Partial differential equations, 2nd edition,
{\em Graduate Studies in Mathematics} {\bf 19}, American Mathematical Society, Providence, Rhode Island, 2010.

\bibitem{FR}
{\sc X.\ Fern\'andez-Real, X.\ Ros-Oton},
Boundary regularity for the fractional heat equation,
{\em Rev. R. Acad. Cienc. Exactas F\'{\i}s. Nat. Ser. A Math.} {\bf 110} (2016) 49--64.

\bibitem{G}
{\sc A.\ Greco},
Convex functions over the whole space locally satisfying fractional equations,
{\em Minimax Theory Appl.}, to appear.

\bibitem{GS}
{\sc A.\ Greco, R.\ Servadei},
Hopf's lemma and constrained radial symmetry for the fractional Laplacian,
{\em Math. Res. Lett.} \textbf{23} (2016), 863--885.

\bibitem{IMS}
{\sc A.\ Iannizzotto, S.\ Mosconi, M.\ Squassina},
$H^s$ versus $C^0$-weighted minimizers,
{\em Nonlinear Differ. Equ. Appl.} {\bf 22} (2015) 477--497.

\bibitem{IKM}
{\sc K.\ Ishige, T.\ Kawakami H.\ Michihisa},
Asymptotic expansions of solutions of fractional diffusion equations,
\href{https://arxiv.org/abs/1610.09789v1}{arXiv:1610.09789}.

\bibitem{IS}
{\sc K.\ Ishige, P.\ Salani},
A note on parabolic power concavity,
{\em Kodai Math. J.} {\bf 37} (2014) 668--679.

\bibitem{J}
{\sc F.\ John,}
Partial differential equations (fourth edition),
Springer, New York (1982).

\bibitem{Kulczycki}
{\sc T. Kulczycki,}
On concavity of solution of Dirichlet problem for the
equation $(-\Delta)^{1/2 \,} \varphi = 1$ in a convex planar region,
{\em J. Eur. Math. Soc. (JEMS)} \textbf{19} (2017) 1361--1420.

\bibitem{KR}
{\sc T.\ Kulczycki, M.\ Ryznar,}
Gradient estimates of harmonic functions and transition densities for L\'evy processes,
{\em Trans. Amer. Math. Soc.} {\bf 368} (2016) 281--318. 

\bibitem{MN}
{\sc R.\ Musina, A.\ Nazarov},
On fractional Laplacians,
{\em Comm. Partial Differential Equations} {\bf 39} (2014) 1780--1790. 

\bibitem{P}
{\sc G. P\'olya,}
On the zeros of an integral function represented by Fourier's integral,
{\em J. London Math. Soc.} {\bf 1} (1926) 98--99.

\bibitem{Protter-Weinberger}
{\sc M.\ H.\ Protter, H.\ F.\ Weinberger,}
Maximum principles in differential equations,
Springer-Verlag, New York, 1984.

\bibitem{RS}
{\sc X.\ Ros-Oton, J.\ Serra},
The Dirichlet problem for the fractional Laplacian: regularity up to the boundary,
{\em J. Math. Pures Appl.} {\bf 101} (2014) 275-302.

\bibitem{SV}
{\sc R.\ Servadei, E.\ Valdinoci},
Mountain pass solutions for non-local elliptic operators,
{\em J. Math. Anal. Appl.} {\bf 389} (2012) 887--898.

\bibitem{SV1}
{\sc R.\ Servadei, E.\ Valdinoci},
On the spectrum of two different fractional operators,
{\em Proc. Roy. Soc. Edinburgh Sec. A} {\bf 144} (2014) 831--855.

\bibitem{VTV}
{\sc L.\ V\'azquez, J.J.\ Trujillo, M.P.\ Velasco},
Fractional heat equation and the second law of thermodynamics,
{\em Fract. Calc. Appl. Anal.} {\bf 14} (2011) 334--342.

\bibitem{W}
{\sc D.V.\ Widder,}
Positive temperatures on an infinite rod,
{\em Trans. Amer. Math. Soc.} {\bf 55} (1944) 85--95.

\bibitem{mediawiki}
Fractional heat equation: \href{https://www.ma.utexas.edu/mediawiki/index.php/Fractional_heat_equation}{https://www.ma.utexas.edu/mediawiki/index.php/Fractional\_heat\_equation}

\end{thebibliography}
\end{document}